\newcommand{\B}{\mathbb{B}}
\newcommand{\E}{\mathbb{E}}
\newcommand{\N}{\mathbb{N}}
\newcommand{\FD}{\mathcal{FD}}
\newcommand{\Rek}{\text{Rep}_k(\Gamma)}
\newcommand{\Rep}{\text{Rep}(\Gamma)}
\newcommand{\Hom}{\mathrm{Hom}}
\newcommand{\Map}{\mathrm{Map}}
\newcommand{\cross}{\times}
\newcommand{\maps}{\longrightarrow}
\newcommand{\srm}[1]{\stackrel{#1}{\maps}}
\newcommand{\Id}{\textrm{Id}}
\newcommand{\e}{\emph}
\newcommand{\xmaps}{\xrightarrow}
\newcommand{\wt}[1]{\widetilde{#1}}
\newcommand{\sm}{\wedge}
\newcommand{\isom}{\cong}
\newcommand{\heq}{\simeq}
\newcommand{\Manoa}{M\=anoa}
\newcommand{\Hawaii}{Hawai\kern.05em`\kern.05em\relax i}
\def\co{\colon\thinspace}
\theoremstyle{definition} \newtheorem{pasdual}{Definition}[section]
\theoremstyle{definition} \newtheorem{slant}[pasdual]{Definition}
\theoremstyle{remark} \newtheorem{pairex}[pasdual]{Example}
\theoremstyle{plain} \newtheorem{slem}[pasdual]{Lemma}
\theoremstyle{definition} \newtheorem{misbun}[pasdual]{Definition}
\theoremstyle{plain} \newtheorem{misbunlem}[pasdual]{Lemma}
\theoremstyle{definition} \newtheorem{ass}[pasdual]{Definition}
\theoremstyle{definition} \newtheorem{xfam}{Definition}[section]
\theoremstyle{definition} \newtheorem{bundef}[xfam]{Definition}
\theoremstyle{definition} \newtheorem{bunhom}[xfam]{Definition}
\theoremstyle{definition} \newtheorem{fddef}[xfam]{Definition}
\theoremstyle{definition} \newtheorem{ddef}[xfam]{Definition}
\theoremstyle{plain} \newtheorem{dlem}[xfam]{Lemma}
\theoremstyle{remark} \newtheorem{flatex}[xfam]{Example}
\theoremstyle{plain} \newtheorem{zfd}[xfam]{Proposition}
\theoremstyle{plain} \newtheorem{classifying-map}[xfam]{Lemma}
\theoremstyle{plain} \newtheorem{pairing-lemma}[xfam]{Lemma}
\theoremstyle{plain} \newtheorem{B-surj-fd-thm}[xfam]{Theorem}
\theoremstyle{plain} \newtheorem{sgfd}[xfam]{Corollary}
\theoremstyle{plain} \newtheorem{dpfd}[xfam]{Proposition}
\theoremstyle{plain} \newtheorem{fpfd}[xfam]{Lemma}
\theoremstyle{definition} \newtheorem{transdef}[xfam]{Definition}
\theoremstyle{plain} \newtheorem{transprops}[xfam]{Lemma}
\theoremstyle{plain} \newtheorem{fifd}[xfam]{Lemma}
\theoremstyle{plain} \newtheorem{fdex}[xfam]{Corollary}
\theoremstyle{remark} \newtheorem{fdnex}[xfam]{Questions}
\theoremstyle{plain} \newtheorem{bmprop}[xfam]{Proposition}
\theoremstyle{remark} \newtheorem{fdq}[xfam]{Questions}
\theoremstyle{plain} \newtheorem{universal}[xfam]{Proposition}
\theoremstyle{plain} \newtheorem{rholem}{Lemma}[section]
\theoremstyle{plain} \newtheorem{assfac}[rholem]{Proposition}
\theoremstyle{plain} \newtheorem{injcor}[rholem]{Corollary}
\theoremstyle{definition} 
\theoremstyle{plain} 
\theoremstyle{plain} 
\theoremstyle{plain}
\theoremstyle{plain} \newtheorem{slakas}{Proposition}[section]
\title{A finite dimensional approach to the strong Novikov conjecture}
\author{Daniel Ramras} 
\address{Department of Mathematical Sciences, New Mexico State University, Las Cruces, NM 88003}
\email{ramras@nmsu.edu}
\author{Rufus Willett} 
\address{Department of Mathematics, University of \Hawaii\ at \Manoa, Honolulu, HI 96822}
\email{rufus@math.hawaii.edu}
\author{Guoliang Yu} 
\address{Department of Mathematics, Texas A\&M University, College Station, TX 77843-3368 and Shanghai Center for Mathematical Sciences}
\email{guoliangyu@math.tamu.edu}
\begin{document}




\begin{abstract}
The aim of this paper is to describe an approach to the (strong) Novikov conjecture based on continuous families of finite dimensional representations: this is partly inspired by ideas of Lusztig related to the Atiyah-Singer families index theorem, and partly by Carlsson's deformation $K$--theory.   Using this approach, we give new proofs of the strong Novikov conjecture in several interesting cases, including crystallographic groups and surface groups.    The method presented here is relatively accessible compared with other proofs of the Novikov conjecture, and also yields some information about the $K$--theory and cohomology of representation spaces.
\end{abstract}

\maketitle

\section{Introduction}

The aim of this paper is to study the \emph{strong Novikov conjecture} \cite{Kasparov:1988dw} for a finitely presented group $\Gamma$.  If we assume that $\Gamma$ has a finite classifying space $B\Gamma$, one version of this conjecture states that the analytic assembly map
$$
\mu:K_*(B\Gamma)\to K_*(C^*(\Gamma))
$$
is rationally injective; here the left hand side is the $K$--homology of $B\Gamma$ and the right hand side is the $K$--theory of the maximal group $C^*$-algebra of $\Gamma$.  We give a definition of the analytic assembly map in Section \ref{asssec} below.  The strong Novikov conjecture implies the classical Novikov conjecture on homotopy invariance of higher signatures, as well as being closely related to several other famous conjectures.  

In this article, inspired by work of Lusztig \cite{Lusztig:1972dq} on the classical Novikov conjecture, we introduce an approach to the strong Novikov conjecture based on finite dimensional unitary representations of $\Gamma$.  This approach is more elementary than the other main lines of attack on this conjecture, such as Connes and Moscovici's approach via cyclic homology \cite{Connes:1990ht}, the Connes--Gromov--Moscovici approach via Lipschitz cohomology \cite{Connes:2005pt}, or the work of Miscenko \cite{Miscenko:1974ws}, Kasparov \cite{Kasparov:1988dw}, and Baum--Connes--Higson  \cite{Baum:1994pr} discussed below.

A naive version of our approach might proceed as follows.  A finite dimensional unitary representation
\begin{equation}\label{repn}
\rho:\Gamma\to U(n)
\end{equation}
of $\Gamma$ defines a vector bundle $E_\rho$ over $B\Gamma$ via a well-known balanced product construction.  $E_\rho$ defines an element $[E_\rho]$ of the $K$--theory group $K^*(B\Gamma)$ and thus a `detecting homomorphism' 
\begin{equation}\label{detect}
\rho_*:K_*(B\Gamma)\to \Z
\end{equation}
defined by pairing with $[E_\rho]$.    As is well-known\footnote{It is also a special case of Proposition \ref{assfac} below, to which we refer the reader for a proof.}, $\rho_*$ factors through the analytic assembly map $\mu$; hence $\mu(x)\neq 0$ for any $x\in K_*(B\Gamma)$ such that there exists $\rho:\Gamma\to U(n)$ with $\rho_*(x)\neq 0$.  Thus if one can find `enough' representations to detect all of $K_*(B\Gamma)$, one would have proved the strong Novikov conjecture.

Unfortunately, this approach will not work: the bundles $E_\rho$ are flat, so Chern-Weil theory tells us that any `detecting homomorphism' as in line \eqref{detect} above is rationally trivial on reduced $K$--homology.   One possible way to salvage the idea in the paragraph above is to use infinite dimensional representations.  This led to the \emph{Fredholm representations} of Miscenko \cite{Miscenko:1974ws}, and subsequently to Kasparov's \emph{KK-theory} \cite{Kasparov:1988dw}; both of these, and the closely related approach to the Novikov conjecture through the \emph{Baum-Connes conjecture} \cite{Baum:1994pr} have proved enormously fruitful.

In this paper we suggest a different approach.  
The central idea is not to use a single representation as in line \eqref{repn} above, but instead a continuous \emph{family} of representations
$$
\rho:X\to \text{Hom}(\Gamma,U(n))
$$ 
parametrized by a topological space $X$.  Such a family defines a bundle $E_\rho$ over $X\times B\Gamma$ and thus a detecting homomorphism
$$
\rho_*:K_*(B\Gamma)\to K^*(X)
$$
from the $K$--homology of $B\Gamma$ to the $K$--theory of $X$ via slant product with $[E_\rho]\in K^*(B\Gamma\times X)$.  This $\rho_*$ still factors through the analytic assembly map.  The central result of this paper is that for many interesting groups, there \emph{are} enough detecting homomorphisms of this type to `see' all of $K_*(B\Gamma)$.  The strong Novikov conjecture follows\footnote{Given the restrictions Chern--Weil theory  places on  bundles associated to representations, one may ask if similar restrictions exist for families.  Baird and the first author have shown that the Chern classes $c_i$ of the bundle associated to an $X$--shaped family of representations vanish (rationally) for $i$ \e{greater} than the rational cohomological dimension of $X$~\cite{Baird-Ramras}.  Thus, as the rational cohomological dimension of $\Gamma$ increases, it becomes necessary to use higher dimensional families to detect all of $K_*(B\Gamma)$.}.  
We also obtain some information about the $K$--theory and cohomology of the representation varieties $\Hom(\Gamma, U(n))$, which have received a good deal of attention recently from a number of authors (see, for instance, \cite{Adem-Gomez, Baird-commuting} for free abelian groups, and~\cite{Ramras-YM} for surface groups).\\

The main precursor for these ideas is Lusztig's thesis \cite{Lusztig:1972dq}, where the Atiyah-Singer index theorem for families \cite{Atiyah:1971bh} was used to study the Novikov conjecture for finitely generated free abelian groups\footnote{This is also related to \emph{Mukai duality}  for the $n$-torus \cite{mukai}.} \cite[Section 4]{Lusztig:1972dq}, as well as some non-abelian groups \cite[Section 5]{Lusztig:1972dq}.   Our paper can be seen as an attempt to develop and conceptualize the material in \cite[Sections 4 and 5]{Lusztig:1972dq}.  Developments in $K$-homology since Lusztig's work allow us to build a representation-theoretic framework that circumvents the families index theorem: indeed, from a modern point of view, the use of the families index theorem in \cite{Lusztig:1972dq} can be viewed as a way around the absence of a well-understood theory of $K$-homology at that time.   Our approach is perhaps then more elementary that that of \cite{Lusztig:1972dq}, in that it does not require the relatively deep results of \cite{Atiyah:1971bh}.

We were also motivated to put Lusztig's work in a more obviously representation theoretic-framework by Carlsson's \emph{deformation $K$-theory}: in some sense, deformation $K$-theory  develops related ideas in homotopy theory and algebraic $K$--theory.  Carlsson associates to a group $\Gamma$ a \e{spectrum} (in the sense of stable homotopy theory) $K^{\textrm{def}} (\Gamma)$, built from the (topological) category of finite dimensional unitary representations of $\Gamma$ 
(see \cite{Ramras:2007bs} for a description of the construction).  The homotopy groups of this spectrum can be described in terms of spherical families of representations~\cite{Baird-Ramras}, and the topological Atiyah--Segal map
$$\pi_* K^{\textrm{def}} (\Gamma) \maps K^{-*} (B\Gamma)$$
considered in~\cite{Baird-Ramras} might be viewed as a sort of dual to the analytic assembly map.  Our results make this `duality' more precise: they show in particular that rational surjectivity of this map implies rational injectivity of the analytic assembly map (however, from the perspective of the Novikov conjecture, there is no reason to restrict attention to spherical families, and indeed we gain some ground by allowing our families to be parametrized by arbitrary spaces $X$).\\

Our results on the Novikov conjecture are not new: the strong Novikov conjecture is known for a huge class of groups, and we are not able to add any new cases   (in fact, there is no group which is known to lie outside the scope of current results on the conjecture).  However, we think the methods of this paper are interesting, and hope they encourage connections between analytic approaches to the Novikov conjecture and some other parts of topology.  We have aimed to keep the paper as self-contained as possible, avoiding the use of complicated general theories wherever we can: we hope this makes the paper a good introduction to aspects of the theory, both for $C^*$-algebraists who know some topology, and topologists who know some $C^*$-algebra theory.

\subsection*{Outline of the paper}

Section \ref{asssec} gives an elementary approach (based on Paschke duality) to one of the slant products between $K$-theory and $K$-homology.  We then use this and the Miscenko bundle to define a version of the analytic assembly map.  

Section \ref{fdsec} introduces a class of groups that we call $\FD$ (for `flatly detectable'): roughly this consists of those groups $\Gamma$ for which all classes in $K_*(B\Gamma)$ can be detected by families of representations as described in the introduction.  We prove that $\FD$ contains $\Z$ and is closed under taking free products, direct products and finite index supergroups, whence it contains all crystallographic groups.  We also prove that $\FD$ contains surface groups: this uses results on Yang-Mills theory from \cite{Ramras-stable-moduli}.  At the end of the section we give some concrete examples of groups that are not in the class $\FD$, ask some open questions, and give an application to computing Betti numbers of representation spaces.

Finally, Section \ref{togsec} uses our explicit slant product to prove that the detecting maps arising from families of flat bundles factor through the analytic assembly map.  This completes the paper by connecting Section \ref{fdsec} to the strong Novikov conjecture.

\section{Slant products and assembly in analytic $K$--theory}\label{asssec}

In this section we use Paschke duality (as refined by Higson \cite{Higson:1995kx} and Higson--Roe \cite[Chapter 5]{Higson:2000bs}) to give a concrete description of one of the slant products in operator $K$--theory.  This slant product was perhaps first given an analytic definition by Atiyah and Singer via their families index theorem \cite{Atiyah:1971bh}, and subsequently by Kasparov in the much broader context of his bivariant $KK$-theory (see for example \cite{Kasparov:1988dw}); our approach is perhaps simpler and more direct than either of these, however.  It is inspired by (but not the same as) the slant product briefly discussed in \cite[Exercise 9.8.9]{Higson:2000bs}.   

We then use this slant product and the so-called \emph{Miscenko bundle} to give a relatively straightforward approach to the analytic assembly map
$$
\mu\co K_*(B\Gamma)\to K_*(C^*(\Gamma))
$$
in the case that $\Gamma$ is a discrete group admitting a finite classifying space $B\Gamma$.
All of this could of course be done using Kasparov's bivariant $KK$-theory \cite{Kasparov:1988dw}, but our approach seems simpler and more direct.   

See \cite[Chapters 4 and 5]{Higson:2000bs} for background information on analytic $K$--theory and the Paschke duality approach to $K$--homology theory used in what follows.

\begin{pasdual}[\cite{Higson:2000bs}, Chapter 5]\label{pasdual}
Let $A$ be a $C^*$-algebra.  A representation of $A$ on a Hilbert space $\mathcal{H}$ is said to be \emph{nondegenerate} if $\{a\xi~|~a\in A, ~\xi\in\mathcal{H}\}$ is dense in $\mathcal{H}$ (for example, if the representation is unital).  

A representation of $A$ on $\mathcal{H}$ is said to be \emph{ample} if it is nondegenerate and no non-zero element in $A$ acts as a compact operator on $\mathcal{H}$.  

Let now $\tilde{A}$ be the unitization of $A$ (even if $A$ is already unital, in which case $\tilde{A}\cong A\oplus \C$) and fix an ample representation of $\tilde{A}$.  The \emph{dual} of $A$ is the $C^*$-algebra
$$
\mathcal{D}(A):=\{T\in\mathcal{B}(\mathcal{H})~|~[T,a]\in\mathcal{K}(\mathcal{H}) \text{ for all } a\in\tilde{A}\},
$$
i.e.\ the set of operators on $\mathcal{H}$ that commute with $\tilde{A}$ up to compact operators.  It does not depend on the choice of ample representation up to non-canonical isomorphism.  Moreover, the $K$--theory groups of $\mathcal{D}(A)$, $K_*(\mathcal{D}(A))$, do not depend on the choice of ample representation up to canonical isomorphism.  For the purpose of this piece, we follow \cite[Definition 5.2.7]{Higson:2000bs}, and define the $i^\text{th}$ $K$--homology group of $A$ to be 
$$
K^i(A):=K_{1-i}(\mathcal{D}(A)).
$$
\end{pasdual}

\begin{slant}\label{slant}
Let $A$ and $B$ be $C^*$-algebras, and let $\mathcal{H}_A$ and $\mathcal{H}_B$ be ample representations of $\tilde{A}$ and $\tilde{B}$ respectively.  Let the spatial tensor product $A\otimes B$ be represented on $\mathcal{H}_A\otimes \mathcal{H}_B$ in the natural way.  Consider also the $C^*$-algebras 
$$A\otimes\mathcal{K}(\mathcal{H}_B)\subseteq \mathcal{B}(\mathcal{H}_A\otimes\mathcal{H}_B)$$
and 
$$A\otimes\mathcal{B}(\mathcal{H}_B)\subseteq \mathcal{B}(\mathcal{H}_A\otimes\mathcal{H}_B).$$
Define a function
$$\sigma=\sigma^{A,B}\co(A\otimes B)\otimes \mathcal{D}(B) \to \frac{A\otimes\mathcal{B}(\mathcal{H}_B)}{A\otimes\mathcal{K}(\mathcal{H}_B)}$$
(where we consider $\mathcal{D}(B)$ as defined using $\mathcal{H}_B$) by the formula 
\begin{equation}\label{sigmadef}
\sigma:(a\otimes b)\otimes T\mapsto a\otimes bT;
\end{equation}
note moreover that if $a\in A$, $b\in B$ and $T\in\mathcal{D}(B)$, then the elements $a\otimes b$ and $1\otimes T$ in $\mathcal{B}(\mathcal{H}_A\otimes\mathcal{H}_B)$ commute up to elements of $A\otimes\mathcal{K}(\mathcal{H}_B)$, whence it follows that $\sigma$ is actually a $*$-homomorphism.\footnote{It would perhaps be more natural to use the stable multiplier algebra $M(A\otimes\mathcal{K}(\mathcal{H}_B))$ where we have used $A\otimes\mathcal{B}(\mathcal{H}_B)$; the latter is certainly good enough for our purposes, however, and seems to have functoriality properties that are somewhat simpler to analyze.  The fact `$K_*(A\otimes\mathcal{B}(\mathcal{H}_B))=0$', which we will use shortly, is also significantly easier to prove than its analog `$K_*(M(A\otimes\mathcal{K}(\mathcal{H}_B)))=0$'. }

The $*$-homomorphism $\sigma$ thus induces a map on $K$--theory that fits into the composition 
\begin{equation}\label{preslant}
K_i(A\otimes B)\otimes K_j(\mathcal{D}(B))\to K_{i+j}(A\otimes B\otimes \mathcal{D}(B))\stackrel{\sigma_*}{\to}K_{i+j}\Big(\frac{A\otimes\mathcal{B}(\mathcal{H}_B)}{A\otimes\mathcal{K}(\mathcal{H}_B)}\Big),
\end{equation}
where the first map is the usual (external) product in operator $K$--theory (\cite[Section 4.7]{Higson:2000bs}).  Now, the definition of $K$--homology in terms of dual algebras yields $K_i(\mathcal{D}(B))=K^{1-i}(B)$.  Moreover, using that $K_*(A\otimes\mathcal{B}(\mathcal{H}_B))=0$ (noting that ampleness implies that $\mathcal{H}_B$ is infinite dimensional, this follows from an easy Eilenberg swindle argument, just as for $\mathcal{B}(\mathcal{H}_B)$ itself) and the long exact sequence in $K$--theory we have natural isomorphisms
$$
K_i\Big(\frac{A\otimes\mathcal{B}(\mathcal{H}_B)}{A\otimes\mathcal{K}(\mathcal{H}_B)}\Big)\cong K_{i-1}(A\otimes\mathcal{K}(\mathcal{H}_B))\cong K_{i-1}(A).
$$
Thus line \eqref{preslant} is equivalent to a map
$$
K_i(A\otimes B)\otimes K^j(B)\to K_{(i+(1-j))-1}(A)=K_{i-j}(A).
$$
We call the map in the line above the \emph{slant product} in operator $K$--theory.  If $x\in K_i(A\otimes B)$ and $y\in K^j(B)$, we denote their slant product by $x/y\in K_{i-j}(A)$.
\end{slant}

Appendix \ref{kasap} gives a proof that this slant product is the same as the relevant special case of the Kasparov product (and thus that it agrees with the standard definitions in the literature).

\begin{pairex}\label{pairex}
Say in the above that $A=\C$, so that $A\otimes B$ is canonically isomorphic to $B$, and the slant product reduces to a pairing
$$
K_i(B)\otimes K^j(B)\to K_{i-j}(\C)\cong\left\{\begin{array}{ll} \Z & i=j \text{ mod } 2 \\ 0 & \text{otherwise} \end{array}\right..
$$
This pairing can be identified with the usual pairing between $K$--theory and $K$--homology as we now explain.  Assume throughout for simplicity that $B$ is unital (which is in any case all we will need).

Assume also, at least for the moment, that $i=j=0$.  It will suffice to show that the pairing above agrees with the usual pairing between $K$--homology and $K$--theory when $[p]\in K_0(B)$ is a class represented by some projection $p\in M_n(B)$, and $[u]\in K^0(B)$ is represented by some unitary $u\in \mathcal{D}(B)$.

Now, according to \cite[Proposition 4.8.3]{Higson:2000bs}, the image of the element 
$$
[p]\otimes [u]\in K_0(B)\otimes K_0(\mathcal{D}(B))
$$
under the product map to $K_0(B\otimes \mathcal{D}(B))$ can be represented by the unitary
$$
p\otimes u+(1-p)\otimes 1\in M_n(B)\otimes \mathcal{D}(B)\cong M_n(B\otimes \mathcal{D}(B)).
$$
Let $\mathcal{Q}(\mathcal{H}_B)=\mathcal{B}(\mathcal{H}_B)/\mathcal{K}(\mathcal{H}_B)$ denote the Calkin algebra and for $x\in\mathcal{B}(\mathcal{H}_B)$ write $\overline{x}$ for its image under the quotient map $\mathcal{B}(\mathcal{H}_B)\to\mathcal{Q}(\mathcal{H}_B)$.  Write $u_n$ for the element of $M_n(\mathcal{B}(\mathcal{H}_B))$ with all diagonal entries $u$, and all other entries zero.  Then it is not hard to check that the natural extension of $\sigma^{\C,B}$ to the matrix algebra $M_n(B\otimes \mathcal{D}(B))$ acts as follows:
$$
\sigma^{\C,B}\co p\otimes u+(1-p)\otimes 1\mapsto \overline{pu_n+(1-p)}\in\frac{M_n(\mathcal{B}(\mathcal{H}_B))}{M_n(\mathcal{K}(\mathcal{H}_B))}\cong M_n(\mathcal{Q}(\mathcal{H}_B)).
$$
Using that $p$ is a projection, and that $p$ and $u_n$ commute up to $M_n(\mathcal{K}(\mathcal{H}_B))$, we have that
$$
\overline{pu_n+(1-p)}=\overline{p^2u_n+(1-p)}=\overline{pu_np+(1-p)},
$$
whence the slant product of $[p]$ and $[u]$ is equal to the image of the class of $\overline{pu_np+1-p}$ in 
$K_1(\mathcal{Q}(\mathcal{H}_B))$ under the boundary map
$$
\partial ~: ~K_1(\mathcal{Q}(\mathcal{H}_B))\to K_0(\mathcal{K}(\mathcal{H}_B))\cong\Z.
$$
In this special case, however, this boundary map is concretely realized by the formula
$$
K_1(\mathcal{Q(\mathcal{H}_B)})\to\Z,~~[\overline{v}]\mapsto \text{Index}(v)
$$
(see for example \cite[Proposition 4.8.8]{Higson:2000bs}); note that if $\overline{v}\in M_n(\mathcal{Q}(\mathcal{H}_B))$ is unitary, then $v\in M_n(\mathcal{B}(\mathcal{H}_B))$ is Fredholm by Atkinson's theorem, so this makes sense.  Our conclusion, finally, is that the slant product is given by the integer
$$
\text{Index}(pu_np+1-p)
$$ 
which is the formula for the pairing between $K$--homology and $K$--theory from \cite[Section 7.2]{Higson:2000bs}.  The only other case of interest is $i=j=1$; this works analogously, however, using the same product formula.
\end{pairex} 

The following lemma, giving two simple naturality properties of the slant product, will be needed later.

\begin{slem}\label{slem}
\begin{enumerate}
\item The slant product
$$
K_i(A\otimes B)\otimes K^j(B)\to K_{i-j}(A)
$$
is functorial in the sense that if $\phi\co A\to C$ is a $*$-homomorphism, $x\in K_*(A\otimes B)$ and $y\in K^*(B)$, then 
$$
\phi_*(x/y)=((\phi\otimes 1_B)_*x)/y
$$
as elements of $K_*(C)$.
\item Let $A$, $B$, $C$ be unital $C^*$-algebras, $x$ be a class in $K_i(A\otimes B)$, $y$ be a class in $K^j(B)$ and $z$ be a class in $K_k(C)$.  Then 
$$
(z\otimes x)/y=z\otimes (x/y)
$$
as elements of $K_{i+k-j}(C\otimes A)$.
\end{enumerate}
\end{slem}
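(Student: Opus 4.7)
The plan is to prove both parts by unwinding Definition \ref{slant} and checking that each of the three ingredients comprising the slant product --- the external product in $K$-theory, the map $\sigma_*$ induced by $\sigma = \sigma^{A,B}$, and the boundary homomorphism arising from the extension
$$0 \to A \otimes \mathcal{K}(\mathcal{H}_B) \to A \otimes \mathcal{B}(\mathcal{H}_B) \to A \otimes \mathcal{B}(\mathcal{H}_B)/A \otimes \mathcal{K}(\mathcal{H}_B) \to 0$$
combined with the Morita isomorphism $K_*(A \otimes \mathcal{K}(\mathcal{H}_B)) \cong K_*(A)$ --- is compatible with the operation appearing in the given part. I would verify these three compatibilities in turn.

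For part (1), denote by $y_* \in K_{1-j}(\mathcal{D}(B))$ the class corresponding to $y \in K^j(B)$. First, naturality of the external product in its first variable yields
$$(\phi \otimes 1_B \otimes 1_{\mathcal{D}(B)})_*(x \otimes y_*) = ((\phi \otimes 1_B)_* x) \otimes y_*.$$
Second, a direct check on elementary tensors $(a \otimes b) \otimes T \mapsto a \otimes bT$ shows that $\sigma^{C,B} \circ (\phi \otimes 1_B \otimes 1_{\mathcal{D}(B)})$ and $(\phi \otimes 1_{\mathcal{B}(\mathcal{H}_B)/\mathcal{K}(\mathcal{H}_B)}) \circ \sigma^{A,B}$ agree. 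Third, both the connecting homomorphism and the Morita isomorphism are natural with respect to $\phi$. Composing these three naturality statements yields part (1).

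For part (2), I would run the parallel three-step argument with $\phi_*$ replaced throughout by external product with $z \in K_k(C)$. Associativity of the external product gives $(z \otimes x) \otimes y_* = z \otimes (x \otimes y_*)$. The key identification, again checked on elementary tensors, is that $\sigma^{C \otimes A, B}$ factors as $\mathrm{id}_C \otimes \sigma^{A, B}$ followed by the natural quotient map
$$C \otimes (A \otimes \mathcal{B}(\mathcal{H}_B)/A \otimes \mathcal{K}(\mathcal{H}_B)) \to (C \otimes A \otimes \mathcal{B}(\mathcal{H}_B))/(C \otimes A \otimes \mathcal{K}(\mathcal{H}_B)).$$
The last piece needed is that the boundary homomorphism commutes with external products with classes in $K_*(C)$.

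This last compatibility of the boundary map with the external product is the main obstacle. It is a standard but not entirely formal fact in operator $K$-theory, arising from the fact that the Higson--Roe external product can be computed at the level of extensions (see \cite[Section 4.7]{Higson:2000bs}); if desired, it can also be reduced to a direct check using the Toeplitz extension description of the boundary map. Once this fact is invoked, both parts of the lemma follow by assembling the three natural operations along the definition of the slant product, so neither part requires any genuinely new input beyond Definition \ref{slant} and standard $K$-theoretic bookkeeping.
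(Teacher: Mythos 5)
Your proposal is correct and matches the paper's proof closely: both decompose the slant product into its three constituent operations (external product, $\sigma_*$, and boundary/Morita isomorphism), verify compatibility of each with $\phi_*$ or with external product by $z$, and identify the compatibility of the boundary map with the $K$-theory product as the key nonformal ingredient, invoking Higson--Roe Section 4.7 (the paper cites Proposition 4.7.6 there) for exactly this point. The paper's treatment of part (2) is terser than yours, but the underlying argument is the same.
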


\begin{proof} 
Look first at part (1).  With notation as in Definition \ref{slant}, note first that if 
$$
\phi\otimes 1_{\mathcal{B}(\mathcal{H}_B)}:\frac{A\otimes \mathcal{B}(\mathcal{H}_B)}{A\otimes\mathcal{K}(\mathcal{H}_B)}\to \frac{C\otimes \mathcal{B}(\mathcal{H}_B)}{C\otimes\mathcal{K}(\mathcal{H}_B)}
$$  
is the natural $*$-homomorphism induced by $\phi$, then the definition of 
$$
\sigma^{A,B}\co A\otimes B\otimes\mathcal{D}(B)\to \frac{A\otimes \mathcal{B}(\mathcal{H}_B)}{A\otimes\mathcal{K}(\mathcal{H}_B)}
$$
implies that 
\begin{equation}\label{sigrho}
\sigma^{C,B}\circ(\phi\otimes 1_B\otimes 1_{\mathcal{D}(B)})=(\phi\otimes 1_{\mathcal{B}(\mathcal{H}_B)})\circ \sigma^{A,B}.
\end{equation}
It follows then from the definition of the slant product that, up to the isomorphism 
\begin{equation}\label{boundary}
K_i\Big(\frac{A\otimes\mathcal{B}(\mathcal{H}_B)}{A\otimes \mathcal{K}(\mathcal{H}_B)}\Big)\cong K_{i-1}(A)
\end{equation}
(and similarly with $A$ replaced by $C$), the $K$--theory element $\phi_*(x/y)$ is equal to 
\begin{align*}
(\phi\otimes 1_\mathcal{B})_*(\sigma^{A,B}_*(x\otimes y))&=\sigma^{C,B}_*((\phi\otimes 1_B\otimes 1_{\mathcal{D}(B)})_*(x\otimes y)) \\
& =\sigma^{C,B}_*(((\phi\otimes 1_B)_*x)\otimes y) \\
& =((\phi\otimes 1_B)_*x)/y),
\end{align*}
where we have used line \eqref{sigrho} in the first equality and naturality of the $K$--theory product in the second.  Up to the isomorphism in line \eqref{boundary} again, this is exactly the statement of the lemma.

Part (2) is a consequence of the formula in line \eqref{sigmadef} above, and naturality properties of the $K$--theory product with respect to $*$-homomorphisms and boundary maps \cite[Proposition 4.7.6]{Higson:2000bs}. 
\end{proof}

The following $K$--theory class is important for the definition of assembly.

\begin{misbun}\label{misbun}
Let $\Gamma$ be a (finitely presented) discrete group with finite classifying space $B\Gamma$, and let $C^*(\Gamma)$ denote the maximal group $C^*$-algebra for $\Gamma$.  Let $E\Gamma$ be the universal covering space of $B\Gamma$.  Then the \emph{Miscenko bundle} for $\Gamma$, denoted $M_\Gamma$, is the bundle over $B\Gamma$ with fibres $C^*(\Gamma)$ defined as the quotient of the space $E\Gamma\times C^*(\Gamma)$ by the diagonal action
$$
g\cdot(z,a):=(gz,u_ga),
$$
 where $u_g \in C^*(G)$ is the unitary element of this $C^*$-algebra corresponding to $g$.
 \end{misbun}

\begin{misbunlem}\label{misbunlem}
The $C^*$-algebra 
$$C(B\Gamma,C^*(\Gamma))\cong C^*(\Gamma)\otimes C(B\Gamma)$$
acts naturally on the right of the space of sections of the Miscenko bundle, and this space of sections is a finitely generated projective module over $C^*(\Gamma)\otimes C(B\Gamma)$.

In particular, the Miscenko bundle defines a class
$$[M_\Gamma]\in K_*(C^*(\Gamma)\otimes C(B\Gamma)).$$
\end{misbunlem}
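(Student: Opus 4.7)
The plan is to realize $\Gamma(M_\Gamma)$ as a direct summand of a finitely generated free module over $C^*(\Gamma)\otimes C(B\Gamma)$, via a Serre--Swan style argument using local trivializations and a partition of unity.

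First I would check that the Miscenko bundle is a locally trivial bundle of free rank-one right Hilbert $C^*(\Gamma)$-modules. The key observation is that the diagonal $\Gamma$-action on $E\Gamma\times C^*(\Gamma)$ used to form $M_\Gamma$ commutes with the right multiplication of $C^*(\Gamma)$ on the second factor (since left and right multiplication in $C^*(\Gamma)$ commute), so each fiber of $M_\Gamma$ inherits a well-defined right $C^*(\Gamma)$-module structure canonically isomorphic to $C^*(\Gamma)$ itself. Local triviality follows from the fact that $E\Gamma\to B\Gamma$ is a principal $\Gamma$-bundle: over any open $U\subseteq B\Gamma$ small enough to admit a continuous section $s_U\co U\to E\Gamma$, the assignment $[s_U(x),a]\mapsto (x,a)$ gives a trivialization of $M_\Gamma|_U$ as a bundle of right $C^*(\Gamma)$-modules. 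Then pointwise scalar multiplication by functions in $C(B\Gamma)$ and fiberwise right multiplication by elements of $C^*(\Gamma)$ commute and assemble into the desired right action of $C(B\Gamma)\otimes C^*(\Gamma)$ on the space of continuous sections $\Gamma(M_\Gamma)$.

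For finite generation and projectivity I would use that $B\Gamma$ is a finite CW complex (hence compact) to choose a finite open cover $\{U_i\}_{i=1}^n$ over which $M_\Gamma$ trivializes via isomorphisms $\tau_i$, together with a subordinate partition of unity $\{\phi_i^2\}_{i=1}^n$ (so that $\sum_i \phi_i^2 = 1$). I would then define maps
$$
\alpha\co \Gamma(M_\Gamma)\to (C(B\Gamma)\otimes C^*(\Gamma))^n, \qquad \alpha(\xi)=\bigl(\phi_i\cdot\tau_i(\xi)\bigr)_{i=1}^n,
$$
$$
\beta\co (C(B\Gamma)\otimes C^*(\Gamma))^n\to \Gamma(M_\Gamma), \qquad \beta(s_1,\dots,s_n)=\sum_{i=1}^n \phi_i\cdot \tau_i^{-1}(s_i),
$$
(with all terms extended by zero outside the relevant $U_i$) and verify that $\beta\circ\alpha=\mathrm{id}_{\Gamma(M_\Gamma)}$. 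This exhibits $\Gamma(M_\Gamma)$ as a direct summand of a finitely generated free $C(B\Gamma)\otimes C^*(\Gamma)$-module, and hence produces the desired class $[M_\Gamma]\in K_0(C^*(\Gamma)\otimes C(B\Gamma))$.

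The main obstacle is the routine but slightly finicky verification that the $\Gamma$-equivariance in the quotient construction descends the right $C^*(\Gamma)$-module structure fiberwise to a genuine locally trivial bundle whose transition functions respect the Hilbert $C^*(\Gamma)$-module structure; once this is in hand, the partition-of-unity argument above is entirely standard. Finiteness of $B\Gamma$ enters in two essential places: to guarantee a finite trivializing cover (yielding finite generation), and to ensure that $C(B\Gamma)$ is unital so that $C(B\Gamma)\otimes C^*(\Gamma)$ acts unitally on $\Gamma(M_\Gamma)$.
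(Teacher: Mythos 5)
Your proposal is correct and follows essentially the same Serre--Swan partition-of-unity argument as the paper: both realize the sections of $M_\Gamma$ as a direct summand of $(C^*(\Gamma)\otimes C(B\Gamma))^{\oplus n}$ by choosing a finite trivializing cover and using a subordinate partition of unity to build a split injection into the free module. The only cosmetic difference is that you use the ``square root'' partition $\{\phi_i^2\}$ so that the same functions $\phi_i$ appear in both $\alpha$ and $\beta$, whereas the paper uses an inner/outer cover $\{U_i\subset V_i\}$ with $\phi_i$ in one direction and bump functions $\psi_i$ in the other; both are standard and yield the same conclusion.
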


\begin{proof}
We first define the $C^*(\Gamma)\otimes C(B\Gamma)$ module structure on the sections of $M_\Gamma$.  Let $C_b(E\Gamma,C^*(\Gamma))$ denote the $C^*$-algebra of continuous bounded functions from $E\Gamma$ to $C^*(\Gamma)$, which admits a natural left--$\Gamma$ action defined for $z\in E\Gamma$, $g\in\Gamma$ and $f\in C_b(E\Gamma,C^*(\Gamma))$ by
$$
(g\cdot f)(z):=u_g f(g^{-1}z);
$$
the space of sections of $M_\Gamma$ then clearly identifies with the fixed point subalgebra $C_b(E\Gamma,C^*(\Gamma))^\Gamma$,  consisting of $\Gamma$--equivariant maps.  Moreover, if $\pi\co E\Gamma\to B\Gamma$ is the canonical quotient, then the formula
$$
(f \cdot h)(z):=f(z)h(\pi(z))
$$
for $f\in C_b(E\Gamma,C^*(\Gamma))^\Gamma$ and $h\in C(B\Gamma,C^*(\Gamma))$ defines a right action of the algebra $C(B\Gamma,C^*(\Gamma))$ on the sections of the Miscenko bundle; we must show that this makes this space of sections into a finitely generated projective module over $C(B\Gamma,C^*(\Gamma))$.

Note then that the Miscenko bundle is locally trivial (as it is locally isomorphic to the bundle $E\Gamma\times C^*(\Gamma)$), so there exists a finite open cover of $B\Gamma$, say $\{U_1,...,U_n\}$, such that the closure of each $U_i$ is contained in some open set $V_i$ over which the Miscenko bundle is trivial.  Let $\{\phi_i\}_{i=1}^n$ be a partition of unity subordinate to $\{U_i\}_{i=1}^n$, and for each $i$ let $\psi_i$ be a function on $B\Gamma$ that is equal to $1$ on $U_i$ and vanishes outside $V_i$.  For each $i$, let $\widetilde{U_i}$ and $\widetilde{V_i}$ be arbitrary choices of homeomorphic lifts of $U_i$, $V_i$ respectively, and by abuse of notation identify functions supported in $\widetilde{U_i}$ and $U_i$, and functions supported on $\widetilde{V_i}$ and $V_i$, without further comment.  Then the $C(B\Gamma,C^*(\Gamma))$-module map
\begin{align*}
\Phi\co  C_b(E\Gamma,C^*(\Gamma))^\Gamma & \to C(B\Gamma,C^*(\Gamma))^{\oplus n} \\
f & \mapsto \oplus_{i=1}^n (\phi_if|_{\widetilde{U_i}})
\end{align*}
includes $C_b(E\Gamma,C^*(\Gamma))^\Gamma$ as a submodule of the free module $C(B\Gamma,C^*(\Gamma))^{\oplus n}$, and is moreover split by the $C(B\Gamma,C^*(\Gamma))$-module map
\begin{align*}
\Psi\co  C(B\Gamma,C^*(\Gamma))^{\oplus n} & \to C_b(E\Gamma,C^*(\Gamma))^\Gamma \\
(f_i)_{i=1}^n & \mapsto \sum_{i=1}^n \psi_i f_i|_{V_i};
\end{align*}
this shows that $C_b(E\Gamma,C^*(\Gamma))^\Gamma$ is finitely generated and projective as required.
\end{proof}

\begin{ass}\label{ass}
Let $\Gamma$, $B\Gamma$ be as in the previous definition.  Then the \emph{analytic assembly map} is the homomorphism
$$
\mu\co K_*(B\Gamma)\to K_*(C^*(\Gamma))
$$
defined by taking the slant product with the class of the Miscenko bundle $[M_\Gamma]$, i.e.\ 
$$
\mu(x)=[M_\Gamma]/x
$$
for all $x\in K_*(B\Gamma)$.
\end{ass}

\section{Families of representations and the class $\FD$}\label{fdsec}

Throughout this section, we let $\Gamma$ denote a (finitely presented) group with finite classifying space $B\Gamma$.  For $k\in \N$, we let $U(k)$ denote the $k$--dimensional unitary group, and 
$$\Rek:=\text{Hom}(\Gamma,U(k))$$
the space\footnote{$\Rek$ is given the subspace topology inherited from the product topology on $\text{Map}(\Gamma, U(k)) = U(k)^\Gamma$.} of $k$ dimensional unitary representations of $\Gamma$.   Define
$$
\Rep:=\bigsqcup_{k=1}^\infty \Rek.
$$

\begin{xfam}\label{xfam}
Let $X$ be a finite $CW$-complex, and let 
$$
\rho\co X\to\Rep
$$
be a continuous map.  We call $\rho$ an \emph{$X$-family of representations}, or simply a \emph{family of representations}.  We write $\rho_x$, a homomorphism from $\Gamma$ to some $U(k)$, for the image of $x\in X$ under $\rho$.
\end{xfam}

Note that if $\rho:X\to \Rep$ is a family of representations, then the restriction of $\rho$ to any connected component of $X$ must take values in $\Rek$ for some fixed $k$. 

Using an $X$-family as in the above definition, one may form a vector bundle over the space $B\Gamma\times X$ in the following way.

\begin{bundef}\label{bundef}
Let $\rho\co X\to \Rep$ be a family of representations.  Write $X=X_1\sqcup\cdots \sqcup X_n$ for the decomposition of $X$ into connected components, and for each $i=1,...,n$ say the image of $\rho$ restricted to $X_i$ is contained in $\text{Rep}_{k_i}(\Gamma)$.

Let $E\Gamma$ be the universal covering space of $B\Gamma$.  Consider the space
$$\bigsqcup_{i=1}^n~E\Gamma\times X_i\times \C^{k_i}$$ equipped with the $\Gamma$ action defined by
$$
g\cdot(z,x,v):=(gz, x, \rho_x(g)v).
$$
The corresponding quotient space is a vector bundle over $B\Gamma\times X$, which we denote by $E_\rho$.

We denote by $[E_\rho]\in K^0( B\Gamma\times X)=K_0(C(X)\otimes C(B\Gamma))$ the topological $K$--theory class of this bundle.  Abusing notation, we also write $[E_\rho]$ for the element $[E_\rho]\otimes 1_\Q\in K^0(B\Gamma\times X)\otimes \Q$.
\end{bundef}

Associated to each family of representations, we now obtain a ``detecting map" as follows.

\begin{bunhom}\label{bunhom}
Let $\rho\co X\to \Rep$ and $[E_\rho]\in K^0(X\times B\Gamma)$ be as in Definitions \ref{xfam} and \ref{bundef} above.  Then taking the slant product with $[E_\rho]\in K^0(X\times B\Gamma)$ defines a homomorphism
\begin{align*}
\rho_*\co K_*(B\Gamma) & \to K^*(X)\\
x & \mapsto [E_\rho]/x.
\end{align*}
Abusing notation, we also write $\rho_*$ for the homomorphism
$$
\rho_*\otimes \text{Id}_\Q\co K_*(B\Gamma)\otimes \Q\to K^*(X)\otimes \Q
$$
induced by $\rho_*$.
\end{bunhom}

\begin{fddef}\label{fddef}
Let $\Gamma$ be a (finitely presented) group with a finite model for the classifying space $B\Gamma$.  A class $x$ in the rational $K$--homology group $K_i(B\Gamma)\otimes \Q$ is said to be \emph{flatly detectable} if there exists a family of representations 
$$
\rho\co X\to \Rep
$$
such that
$$
\rho_*(x) \in K^{-i}(X)\otimes \Q
$$
is non-zero.

A group $\Gamma$ is said to be in the class $\FD$ if it has a finite model for $B\Gamma$ and if all classes in $K_*(B\Gamma)\otimes \Q$ are flatly detectable.
\end{fddef}

The above terminology stems from the fact that when the parameter space $X$ is a point and $B\Gamma$ is a smooth manifold, the bundle $E_\rho\to B\Gamma$ has a canonical flat connection.

\begin{flatex}\label{flatex}
The trivial class $[1]\in K_0(B\Gamma)$ is always flatly detectable: one checks directly that it is detected by the trivial representation 
$$\rho:\text{pt}\to \text{Hom}(\Gamma,U(1)).$$
\end{flatex}

The rest of this section is devoted to finding examples of groups in the class $\FD$.   The first two results show that all finitely generated free groups are in $\FD$.

\begin{zfd}\label{zfd}
The group $\Z$ is in the class $\FD$.
\end{zfd}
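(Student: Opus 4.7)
Since $B\Z=S^1$, one has $K_*(B\Z)\otimes\Q\isom\Q\oplus\Q$, with $K_0$ generated by the class $[\text{pt}]$ of a point and $K_1$ by the fundamental class $[S^1]$ (represented, for instance, by the Dirac operator on the circle). The class $[\text{pt}]$ is already flatly detectable by Example \ref{flatex}, so the task reduces to finding a family $\rho$ with $\rho_*([S^1])\neq 0$.

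For this, I would take $X=U(1)$ and let
\[
\rho\co U(1)\maps\Hom(\Z,U(1)),\qquad \rho_x(n)=x^n,
\]
be the canonical identification. The associated line bundle $E_\rho$ over $B\Z\times X=S^1\times S^1$ is then the Poincar\'e line bundle $L$. Via the K\"unneth isomorphism
\[
K^0(S^1\times S^1)\isom\bigl(K^0(S^1)\otimes K^0(S^1)\bigr)\oplus\bigl(K^{-1}(S^1)\otimes K^{-1}(S^1)\bigr),
\]
one has $[L]=1\otimes 1+\beta\otimes\beta$, where $\beta$ denotes the standard generator of $K^{-1}(S^1)\isom\Z$; this identification follows from the fact that $c_1(L)$ generates $H^2(S^1\times S^1,\Z)$.

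It then remains to compute $\rho_*([S^1])=[L]/[S^1]$. Applying part (2) of Lemma \ref{slem} (after identifying $\C\otimes C(S^1)\isom C(S^1)$) yields $(a\otimes b)/y=a\cdot(b/y)$, and by Example \ref{pairex} the residual slant $b/y$ is the usual $K$--theory/$K$--homology pairing $\langle b,y\rangle\in K_*(\C)$. Using $\langle 1,[\text{pt}]\rangle=1$, $\langle\beta,[S^1]\rangle=1$, and the vanishing of the other two pairings for parity reasons, one gets
\[
[L]/[S^1]=1\cdot\langle 1,[S^1]\rangle+\beta\cdot\langle\beta,[S^1]\rangle=\beta\neq 0
\]
in $K^{-1}(X)$, as required.

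The main technical ingredient is the K\"unneth-level identification of $[L]$ in $K^0(S^1\times S^1)$; everything else is a routine application of the formalism from Section \ref{asssec}. Alternatively, one could bypass K\"unneth by representing $[S^1]$ as the Dirac operator on $S^1$ and computing $\rho_*([S^1])$ as the $K$--theoretic index of the family of Dirac operators twisted by $L$ and parametrized by $X$, recovering $\beta$ directly; this is the viewpoint closer in spirit to Lusztig's original use of the Atiyah--Singer families index theorem.
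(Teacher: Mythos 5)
Your proof is correct and follows essentially the same route as the paper: same choice of family $\rho\co U(1)\to\Hom(\Z,U(1))$, same identification of $E_\rho$ with the Poincar\'e line bundle, and the same endgame via Lemma \ref{slem} part (2) and the nondegenerate $K$--theory/$K$--homology pairing on $S^1$. The only real difference is that where you cite the standard fact that $c_1(L)$ generates $H^2(S^1\times S^1;\Z)$ to get $[L]=1\otimes 1+\beta\otimes\beta$, the paper establishes the corresponding rational statement from scratch by writing down an explicit connection on $E_\rho$, computing its curvature form $2\pi i\,dz\wedge dx$, and applying Chern--Weil theory; either way the key content is the nonvanishing of the degree-two part of $\mathrm{ch}(E_\rho)$.
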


The proof we give below is based on an (unpublished) exposition of Higson-Roe of the proof of the Novikov conjecture for $\Z^n$ in Lusztig's thesis \cite{Lusztig:1972dq}.   Corollary~\ref{sgfd} also covers this case, but for the sake of variety, we give a different proof here.

\begin{proof}
We may of course take $B\Z$ to be a copy of the circle $S^1$, and will also take $X$ to be a copy of $S^1$ (identified with the collection of complex numbers of modulus $1$).  Define $\rho\co X\to \text{Hom}(\Z,U(1))$ by
$$
\rho_x\co n\mapsto x^n.
$$
Concretely, we may identify sections of the line bundle $E_\rho$ over $S^1\times S^1$ with the space of functions $f:\R\times \R\to \C$ that satisfy $f(z+n,x+m)=e^{inx}f(z,x)$ for all $n,m\in\Z$, $z\in\R$ and $x\in \R$.  Now, the formulas
$$
\nabla_z=\partial/\partial z~~,\nabla_x=\partial/\partial x-2\pi i z
$$
define a connection on $E_\rho$, with curvature given by
$$
R(\partial /\partial z,~\partial/\partial x)=2\pi i,
$$
i.e.\ with curvature two-form given by $2\pi idz\wedge dx$ (all of this is just direct computation).  It follows from Chern-Weil theory (see, for example, Milnor--Stasheff \cite[Appendix C]{Milnor-Stasheff}) that the Chern character of $E_\rho$ is given by a generator of $H^2(S^1\times S^1;\R)\cong\R$.  

Now, by Example \ref{flatex}, it suffices to show that any non-zero element of $K_1(B\Z)\otimes \Q\cong \Q$ is flatly detectable.  However, as is well-known, under the Chern isomorphism 
$$Ch: K^0(S^1\times S^1)\otimes \Q\cong H^{even}(S^1\times S^1;\Q),$$
the element $2\pi idz\wedge dx$ corresponds to the element $[u]\otimes [u]\in K^0(S^1\times S^1)\otimes \Q$, where $u:S^1\to U(1)$ is the canonical unitary identifying these spaces, which generates $K^1(S^1)\cong \Z$: up to rational multiples, which is all we need, this follows from the fact that the Chern character is a ring isomorphism, together with the K\"{u}nneth formulas in cohomology and $K$--theory, and the fact that $Ch([u])=[dx]$.  The result follows from this, Lemma \ref{slem} part (2), and (rational) non-degeneracy of the pairing $K_1(S^1)\otimes K^1(S^1)\to \Z$.
\end{proof}

\subsection*{Permanence properties of the class $\FD$}

We now show that $\FD$ is closed under taking free products, direct products and finite index supergroups.

\begin{fpfd}\label{fpfd}
Say $\Gamma_1$ and $\Gamma_2$ are groups in the class $\FD$.  Then their free product $\Gamma=\Gamma_1*\Gamma_2$ is in the class $\FD$.
\end{fpfd}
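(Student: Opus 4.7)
The plan is to take $B\Gamma = B\Gamma_1 \vee B\Gamma_2$ for $\Gamma = \Gamma_1 * \Gamma_2$ (with basepoints at the wedge point); this is a finite $CW$-complex. Let $q_i \co \Gamma \to \Gamma_i$ be the quotient sending $\Gamma_j$ to $1$ for $j \ne i$, and let $p_i = Bq_i \co B\Gamma \to B\Gamma_i$ be the associated retraction, which geometrically collapses the complementary wedge summand to the basepoint.

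The standard wedge decomposition
\[
K_*(B\Gamma)\otimes\Q \;\cong\; \widetilde{K}_*(B\Gamma_1)\otimes\Q \,\oplus\, \widetilde{K}_*(B\Gamma_2)\otimes\Q \,\oplus\, K_*(\mathrm{pt})\otimes\Q
\]
is compatible with the maps $(p_i)_*$: these restrict to the identity on $\widetilde{K}_*(B\Gamma_i)\otimes\Q$ and on the basepoint summand, and to zero on $\widetilde{K}_*(B\Gamma_j)\otimes\Q$ for $j \ne i$. A direct check then shows that any nonzero $x \in K_*(B\Gamma)\otimes\Q$ has $(p_1)_*(x)\neq 0$ or $(p_2)_*(x)\neq 0$. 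Without loss of generality, assume $(p_1)_*(x)\neq 0$. Since $\Gamma_1\in\FD$, one can choose an $X$-family $\rho^{(1)}\co X\to\text{Rep}(\Gamma_1)$ such that $\rho^{(1)}_*((p_1)_*(x))\neq 0$ in $K^*(X)\otimes\Q$.

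The candidate family for $\Gamma$ is $\rho\co X\to\text{Rep}(\Gamma)$ given by $\rho_x := \rho^{(1)}_x \circ q_1$. Unwinding Definition~\ref{bundef}, one sees directly that $E_\rho$ is the pullback $(p_1\times \text{Id}_X)^* E_{\rho^{(1)}}$; equivalently, at the $C^*$-algebra level,
\[
[E_\rho] \;=\; (p_1^* \otimes \text{Id}_{C(X)})_* [E_{\rho^{(1)}}] \;\in\; K_0(C(X)\otimes C(B\Gamma)).
\]
The proof is then completed by the chain of equalities
\[
\rho_*(x) \;=\; [E_\rho]/x \;=\; [E_{\rho^{(1)}}]/((p_1)_*x) \;=\; \rho^{(1)}_*((p_1)_*x) \;\neq\; 0.
\]

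The heart of the argument---and the main obstacle---is the middle equality, which is a projection formula for the slant product in its \emph{second} variable, complementary to Lemma~\ref{slem}(1): for any $*$-homomorphism $\pi\co B\to B'$, $y\in K_*(A\otimes B)$, and $z\in K^*(B')$,
\[
((1_A\otimes \pi)_* y)\,/\,z \;=\; y\,/\,\pi^*(z).
\]
I would prove this directly from Definition~\ref{slant}. In the case at hand, $\pi = p_1^*$ is unital and injective, so an ample representation of the unitization $\widetilde{B'}$ pulls back along $\pi$ to an ample representation of $\widetilde{B}$ on the same Hilbert space; under this choice, Paschke duality realizes $\pi^*\co K^*(B')\to K^*(B)$ via a natural containment $\iota\co \mathcal{D}(B')\subseteq \mathcal{D}(B)$. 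A direct computation on elementary tensors then shows $\sigma^{A,B}\circ(1_A\otimes 1_B\otimes\iota) = \sigma^{A,B'}\circ(1_A\otimes \pi\otimes 1_{\mathcal{D}(B')})$, and combined with naturality of the external $K$-theory product this reduces the projection formula to a diagram chase essentially identical to the proof of Lemma~\ref{slem}(1).
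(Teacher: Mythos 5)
Your proof is correct, and it takes a genuinely different route from the paper's. The two arguments set up identically (wedge model $B\Gamma = B\Gamma_1 \vee B\Gamma_2$, split the $K$-homology, lift a detecting family $\rho^{(1)}$ for $\Gamma_1$ to a family $\rho$ for $\Gamma$ via the quotient $q_1$), but diverge at the computation of $\rho_*(x)$.

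The paper stays entirely within the machinery it has already developed: it writes $x = x_1 \oplus x_2$, restricts $E_\rho$ to the two slices $B\Gamma_1\times X$ and $B\Gamma_2\times X$, observes the second restriction is an external product $\C\otimes F$, and invokes Lemma~\ref{slem}(2) together with the vanishing of $\langle\C, x_2\rangle$ on reduced $K$-homology. You instead push forward: you identify $E_\rho = (p_1\times\Id_X)^*E_{\rho^{(1)}}$ and appeal to a projection formula for the slant product in its \emph{second} variable,
\[
\bigl((1_A\otimes\pi)_*y\bigr)/z = y/\pi^*(z),
\]
which is not in the paper and needs to be supplied. Your sketch of its proof is sound: for $\pi = p_1^*$ (unital and injective since $p_1$ is onto), pulling back an ample representation of $\widetilde{B'}$ gives an ample representation of $\widetilde{B}$ on the same Hilbert space, yielding a natural inclusion $\mathcal{D}(B')\subseteq\mathcal{D}(B)$ that implements $\pi^*$; the relation $\sigma^{A,B}\circ(1\otimes 1\otimes\iota)=\sigma^{A,B'}\circ(1\otimes\pi\otimes 1)$ is immediate from the formula for $\sigma$, and the rest is a diagram chase parallel to the proof of Lemma~\ref{slem}(1). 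Your approach is arguably cleaner conceptually---it makes the underlying naturality explicit and avoids the case-by-case restriction argument---but it buys this at the cost of a lemma the paper did not need to state; the paper's version is more economical given the toolkit already on the table, and avoids any discussion of how ample representations behave under pullback. One small point worth making explicit if you write this up: you only need the projection formula for $\pi$ injective and unital, which is exactly the case your Paschke-duality argument covers---the general case (non-injective $\pi$) would require a different realization of $\pi^*$ on dual algebras.
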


\begin{proof}
Let $B\Gamma_1$ and $B\Gamma_2$ be finite models for the classifying spaces of $\Gamma_1$ and $\Gamma_2$ respectively, and take $B\Gamma$ to be their wedge sum.  Let $x$ be a non-zero element of $K_*(B\Gamma)\otimes \Q$; we must show $x$ is flatly detectable.

The Mayer-Vietoris sequence in $K$--homology (see Higson--Roe \cite[Lemma 7.1.2]{Higson:2000bs}, Yu~\cite[Proposition 3.11]{Yu:1997} or, for a discussion using the viewpoint on $K$--homology emphasized in the present article, see Roe \cite[pp. 37--38]{Roe:1996}) can now be applied to the
covering of $B\Gamma$ by the subspaces $B\Gamma_1$ and $B\Gamma_2$.  The projections of $B\Gamma$ onto these wedge summands give splittings for the inclusions $B\Gamma_1 \hookrightarrow B\Gamma$ and $B\Gamma_2 \hookrightarrow B\Gamma$, and hence the Mayer--Vietoris sequence gives rise to a natural direct sum decomposition
$$
K_*(B\Gamma)\otimes \Q=(K_*(B\Gamma_1)\otimes \Q)\oplus (\widetilde{K}_*(B\Gamma_2)\otimes \Q)
$$
(and similarly with the roles of $\Gamma_1$ and $\Gamma_2$ reversed).  Without loss of generality, assume that we can write $x=x_1\oplus x_2$ with respect to this decomposition, where $x_1$ is non-zero.  Using the assumption that $\Gamma_1$ is in the class $\FD$, there exists a family of representations $\rho^1\co X\to \text{Rep}(\Gamma_1)$ such that $\rho^1_*(x_1)\neq 0$.  The map $\rho^1$ gives rise to $\rho\co X\to \Rep$ by extending trivially on $\Gamma_2$ and using the universal property of the free product.  

Now, the bundle $E_\rho$ restricted to $B\Gamma_1\times X\subseteq B\Gamma\times X$ is equal to $E_{\rho^1}$ by construction, and is equal to an external product $\C\otimes F$ when restricted to $B\Gamma_2\times X$, where $\C$ is the trivial bundle on $B\Gamma_2$ and $F$ is some bundle over $X$ (trivial on each connected component, but we do not need this).  We then have that
$$
\rho_*(x)=[E_\rho]/x=[E_{\rho^1}]/x_1+(\C\otimes F)/x_2=\rho^1_*(x_1)+\langle \C,x_2\rangle\otimes F
$$
using Lemma \ref{slem}, part (2) and Example \ref{pairex}.  As $x_2$ is an element of the reduced $K$--homology of $B\Gamma_2$, however, $\langle \C,x_2\rangle=0$, whence 
$$
\rho_*(x)=\rho^1_*(x_1)\neq 0
$$
completing the proof.
\end{proof}

More generally, if $\Gamma_1$, $\Gamma_2$ are in the class $\FD$ and $\iota_i\co A\to \Gamma_i$ are split inclusions for $i=1,2$, then the amalgamated free product $\Gamma=\Gamma_1*_A\Gamma_2$ is in $\FD$; this follows from a minor elaboration of the argument above, which is omitted.  It is not true that the class $\FD$ is preserved by arbitrary free products with amalgam: see Example \ref{bmprop} below.

Our next goal is to prove that the class $\FD$ is preserved under direct products, and thus in particular that it contains all finitely generated free abelian groups.  In order to avoid using (somewhat non-trivial - \cite[Chapter 9]{Higson:2000bs}) facts about external products in $K$--homology, the following definition is useful.

\begin{ddef}\label{ddef}
Let $\Gamma$ be a (finitely presented) group with a finite model for the classifying space $B\Gamma$. 

The K\"{u}nneth theorem in $K$--theory (due to Atiyah - see  \cite[Corollary 2.7.15]{Atiyah:1967kn}) implies that the external product induces a natural isomorphism
$$
(K^*(B\Gamma)\otimes \Q)\otimes_\Q (K^*(X)\otimes \Q)\cong K^*(B\Gamma\times X)\otimes \Q.
$$
If $\phi\co K^*(X)\otimes \Q\to \Q$ is any linear functional,  we may thus define a natural map
$$
K^*(B\Gamma\times X)\otimes \Q  \xmaps{1\otimes \phi} (K^*(B\Gamma)\otimes \Q) \otimes_\Q \Q \srm{\isom} K^*(B\Gamma)\otimes \Q,
$$
which by abuse of notation we denote $1\otimes \phi$.
We write $K^*_{\FD}(B\Gamma)$ for the subset of $K^*(B\Gamma)\otimes \Q$ consisting of classes of the form $(1\otimes \phi)[E_\rho]$ where $\rho\co X\to \Rep$ is a family of representations, and $\phi\co K^*(X)\otimes \Q\to \Q$ is a linear functional as above.
\end{ddef}

\begin{dlem}\label{dlem}
With notation as above, $K^*_{\FD}(B\Gamma)$ is a subspace of $K^*(B\Gamma)\otimes \Q$. 

Moreover, these two vector spaces are equal if and only if $\Gamma$ is in the class $\FD$.
\end{dlem}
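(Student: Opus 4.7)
The subspace property is the easy half. Closure under scalars is immediate: for $\lambda\in\Q$, $\lambda\cdot(1\otimes\phi)[E_\rho]=(1\otimes\lambda\phi)[E_\rho]$. For closure under addition, given $(1\otimes\phi_i)[E_{\rho^i}]$ with $\rho^i\co X_i\to\Rep$ for $i=1,2$, I would form the disjoint-union family $\rho\co X_1\sqcup X_2\to\Rep$ whose restriction to $X_i$ is $\rho^i$. The bundle $E_\rho$ then restricts to $E_{\rho^i}$ over $B\Gamma\times X_i$, and under the natural splitting $K^*(X_1\sqcup X_2)\otimes\Q\cong(K^*(X_1)\otimes\Q)\oplus(K^*(X_2)\otimes\Q)$, the functional $\phi=\phi_1\oplus\phi_2$ yields $(1\otimes\phi)[E_\rho]=(1\otimes\phi_1)[E_{\rho^1}]+(1\otimes\phi_2)[E_{\rho^2}]$ via the K\"unneth theorem applied over $B\Gamma\times(X_1\sqcup X_2)$.

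The second statement rests on the following compatibility formula, which I would establish first: for any $y\in K_*(B\Gamma)\otimes\Q$, family $\rho\co X\to\Rep$, and functional $\phi\co K^*(X)\otimes\Q\to\Q$,
$$
\langle y,\,(1\otimes\phi)[E_\rho]\rangle \;=\; \phi(\rho_*(y)) \;=\; \phi([E_\rho]/y),
$$
where the bracket on the left is the rational K\"unneth/cap-product pairing $K_*(B\Gamma)\otimes K^*(B\Gamma)\to\Q$ studied in Example~\ref{pairex}. To prove this, write $[E_\rho]=\sum_i\alpha_i\otimes\beta_i$ with $\alpha_i\in K^*(B\Gamma)\otimes\Q$, $\beta_i\in K^*(X)\otimes\Q$ via the rational K\"unneth isomorphism, then apply Lemma~\ref{slem}(2) to each summand to get $[E_\rho]/y=\sum_i\langle y,\alpha_i\rangle\,\beta_i$; applying $\phi$ produces $\sum_i\langle y,\alpha_i\rangle\phi(\beta_i)$, which is also the direct expansion of $\langle y,\sum_i\phi(\beta_i)\alpha_i\rangle=\langle y,(1\otimes\phi)[E_\rho]\rangle$.

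With the formula in hand the equivalence is essentially a duality statement. The pairing $K_*(B\Gamma)\otimes\Q\times K^*(B\Gamma)\otimes\Q\to\Q$ is non-degenerate when $B\Gamma$ is a finite CW complex, as follows from the rational Chern character and the classical universal coefficient theorem. Suppose $\Gamma\in\FD$; then for any nonzero $y\in K_*(B\Gamma)\otimes\Q$ we can choose $\rho$ with $\rho_*(y)\neq 0$ in $K^*(X)\otimes\Q$, and then choose a $\Q$-linear functional $\phi$ with $\phi(\rho_*(y))\neq 0$. The compatibility formula gives $\langle y,(1\otimes\phi)[E_\rho]\rangle\neq 0$, so $y$ does not annihilate $K^*_{\FD}(B\Gamma)$. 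By non-degeneracy, the subspace $K^*_{\FD}(B\Gamma)$ must equal all of $K^*(B\Gamma)\otimes\Q$. Conversely, if $K^*_{\FD}(B\Gamma)=K^*(B\Gamma)\otimes\Q$ and $y\neq 0$, non-degeneracy produces a generator $(1\otimes\phi)[E_\rho]$ pairing nontrivially with $y$, and the formula forces $\rho_*(y)\neq 0$, so $y$ is flatly detectable.

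The only real obstacle is the K\"unneth-based computation in the middle paragraph: one must be careful that the slant product distributes over external products as asserted, but this is exactly what Lemma~\ref{slem}(2) provides, so the argument reduces to bookkeeping. Everything else is formal.
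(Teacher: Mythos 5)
Your proof is correct and follows the same route as the paper's: the disjoint-union construction gives closure under addition, and the equivalence rests on Lemma~\ref{slem}(2) together with rational non-degeneracy of the $K$--theory/$K$--homology pairing. The only difference is that you spell out the compatibility formula $\langle y,(1\otimes\phi)[E_\rho]\rangle=\phi(\rho_*(y))$ explicitly via the K\"unneth decomposition, whereas the paper leaves this step implicit; that expansion is accurate and a useful clarification of what ``follows from Lemma~\ref{slem}(2)'' means.
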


\begin{proof}
It is clear that $K^*_{\FD}(B\Gamma)$ is closed under scalar multiplication; we will show it is closed under addition.  Let $(1\otimes\phi_i)[E_{\rho^i}]$ be elements of $K^*_{\FD}(B\Gamma)$ for $i=1,2$, where $\rho^i\co X_i\to \Rep$ and $\phi_i\co K^*(X_i)\otimes \Q\to \Q$.  Define
\begin{align*}
\rho\co X_1\sqcup X_2\to \Rep
\end{align*}
by $\rho_x=\rho^i_x$ whenever $x\in X_i$, $i=1,2$.  Then, after identifying $K^*(X_i)\otimes \Q$ with subspaces of $K^*(X_1\sqcup X_2)\otimes \Q$ in the natural way for $i=1,2$, we have $[E_\rho]=[E_{\rho^1}]+[E_{\rho^2}]$, and closure under addition follows from this.  

The remaining claim follows from Lemma \ref{slem} part (2) and rational nondegeneracy of the pairing between $K$--theory and $K$--homology (see for example \cite[Theorem 7.6.1]{Higson:2000bs}).
\end{proof}

\begin{dpfd}\label{dpfd}
Let $\Gamma_1,\Gamma_2$ be in the class $\FD$.  Then the direct product $\Gamma=\Gamma_1\times\Gamma_2$ is in the class $\FD$.
\end{dpfd}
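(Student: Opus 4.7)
The plan is to exploit the characterization of $\FD$ provided by Lemma~\ref{dlem}: it suffices to show that $K^*_{\FD}(B\Gamma)=K^*(B\Gamma)\otimes\Q$. Take $B\Gamma=B\Gamma_1\times B\Gamma_2$, and apply the K\"unneth theorem (in the same form used to define $K^*_{\FD}$) to obtain a natural isomorphism
$$
K^*(B\Gamma_1\times B\Gamma_2)\otimes\Q \;\cong\; (K^*(B\Gamma_1)\otimes\Q)\otimes_\Q (K^*(B\Gamma_2)\otimes\Q).
$$
Under this identification, since $K^*_{\FD}(B\Gamma)$ is a subspace, it will be enough to show that every pure tensor $\alpha_1\otimes\alpha_2$ with $\alpha_i\in K^*(B\Gamma_i)\otimes\Q$ lies in $K^*_{\FD}(B\Gamma)$.

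Because $\Gamma_i\in\FD$, Lemma~\ref{dlem} lets us write $\alpha_i=(1\otimes\phi_i)[E_{\rho^i}]$ for some family $\rho^i\co X_i\to\text{Rep}(\Gamma_i)$ and some linear functional $\phi_i\co K^*(X_i)\otimes\Q\to\Q$. I would then form the tensor-product family
$$
\rho\co X_1\times X_2 \longrightarrow \Rep,\qquad \rho_{(x_1,x_2)}(g_1,g_2) := \rho^1_{x_1}(g_1)\otimes \rho^2_{x_2}(g_2),
$$
which is continuous in $(x_1,x_2)$. Using $E\Gamma=E\Gamma_1\times E\Gamma_2$ as the universal cover and unwinding the balanced product construction of Definition~\ref{bundef}, one checks directly that the bundle $E_\rho\to B\Gamma\times X_1\times X_2$ is, up to the canonical reordering of the four factors, the external tensor product of $E_{\rho^1}\to B\Gamma_1\times X_1$ and $E_{\rho^2}\to B\Gamma_2\times X_2$. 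Thus under the K\"unneth decomposition,
$$
[E_\rho]\;=\;[E_{\rho^1}]\otimes [E_{\rho^2}] \;\in\; \bigl(K^*(B\Gamma_1)\otimes K^*(X_1)\bigr)\otimes \bigl(K^*(B\Gamma_2)\otimes K^*(X_2)\bigr)\otimes\Q.
$$

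Finally, taking $\phi:=\phi_1\otimes\phi_2$ as a linear functional on $K^*(X_1\times X_2)\otimes\Q\cong K^*(X_1)\otimes K^*(X_2)\otimes\Q$, a short computation on elementary K\"unneth tensors (simply swapping the middle two factors) gives
$$
(1\otimes\phi)[E_\rho] \;=\; \bigl((1\otimes\phi_1)[E_{\rho^1}]\bigr)\otimes\bigl((1\otimes\phi_2)[E_{\rho^2}]\bigr) \;=\; \alpha_1\otimes\alpha_2,
$$
so $\alpha_1\otimes\alpha_2\in K^*_{\FD}(B\Gamma)$, as required. The main obstacle I anticipate is essentially bookkeeping rather than conceptual: carefully identifying $E_\rho$ with the external product $E_{\rho^1}\otimes E_{\rho^2}$ from the definitions, and then tracking the reordering of factors in the fourfold K\"unneth isomorphism so that the tensor-product functional $\phi_1\otimes\phi_2$ acts on the correct slots. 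Once these identifications are made, the rest is linear algebra.
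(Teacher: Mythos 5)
Your proposal is correct and follows essentially the same line as the paper's own argument: reduce to pure K\"unneth tensors $x_1\otimes x_2$ via Lemma~\ref{dlem}, realize each $x_i$ as $(1\otimes\phi_i)[E_{\rho^i}]$, form the pointwise tensor-product family on $X_1\times X_2$, identify $[E_\rho]$ with the external product $[E_{\rho^1}]\otimes[E_{\rho^2}]$, and apply $\phi_1\otimes\phi_2$ after reordering. The bookkeeping you flag at the end is exactly the level of detail the paper leaves implicit as well.
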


\begin{proof}
Let $B\Gamma_1$ and $B\Gamma_2$ be finite models for the classifying spaces of $\Gamma_1$ and $\Gamma_2$ respectively, and take $B\Gamma$ to be their direct product.

The K\"{u}nneth theorem in $K$--theory \cite[Corollary 2.7.15]{Atiyah:1967kn} implies that the external $K$--theory product induces a natural isomorphism
\begin{equation}\label{kunneth}
(K^*(B\Gamma_1)\otimes \Q)\otimes (K^*(B\Gamma_2)\otimes \Q)\cong K^*(B\Gamma_1\times B\Gamma_2)\otimes \Q
\end{equation}
of graded abelian groups.  Identifying the two sides in line \eqref{kunneth}, Lemma \ref{dlem} implies that it suffices to show that any class  of the form $x_1\otimes x_2$, with $x_i\in K^*(B\Gamma_i)\otimes \Q$ for $i=1,2$, is in $K^*_{\FD}(B\Gamma)$.  

Now, by assumption and Lemma \ref{dlem}, there exist families $\rho^i\co X_i\to \text{Rep}(\Gamma_i)$ and functionals $\phi_i\co K^*(X_i)\otimes \Q\to \Q$ such that $x_i=(1\otimes \phi_i)[E_{\rho^i}]$.  Define a new family $\rho\co X_1\times X_2 \to \Rep$ by ``pointwise tensor product''\footnote{This tensor product map arises from a choice of continuous tensor product map $U(n) \cross U(m) \to U(nm)$; for example one may take the standard Kronecker product of matrices $(A, B) \mapsto A\otimes B$, which commutes with inverses, transposes, and conjugation, and hence maps $U(n)\cross U(m)$ to $U(nm)$.  Since the entries of $A\otimes B$ are just products of entries from $A$ and $B$, continuity is immediate.  Since $(A\otimes B)(C\otimes D) = AC \otimes BD$, this yields a continuous map $\Hom(\Gamma, U(n)) \cross \Hom(\Gamma, U(m)) \to \Hom(\Gamma, U(nm))$.}, i.e.\
$$
\rho\co (x_1,x_2) \mapsto ( ~(g_1,g_2)\mapsto \rho^1_{x_1}(g_1)\otimes \rho^2_{x_2}(g_2)~).
$$

From the construction of $\rho$, it follows that
$$
[E_\rho]=[E_{\rho_1}]\otimes [E_{\rho_2}]\in K^0((X_1\times B\Gamma_1)\times (X_2\times B\Gamma_2))
$$
and thus that (modulo K\"{u}nneth isomorphisms) 
$$
1\otimes \phi_1\otimes \phi_2\co (K^*(B\Gamma)\otimes \Q)\otimes (K^*(X_1)\otimes \Q)\otimes (K^*(X_2)\otimes \Q)\to K^*(B\Gamma)\otimes \Q
$$
takes $[E_\rho]$ to $x_1\otimes x_2$ as required.
\end{proof}

Our next goal is to prove that $\FD$ passes to finite index supergroups; this combines with the previous results to imply, for example, that all torsion free crystallographic groups are in $\FD$.  This requires an analysis of the transfer map in $K$--theory (see for example \cite[Pages 250-1]{Atiyah:1961uq}, where the image of a bundle under transfer is called the \emph{direct image} bundle).  

As an elementary treatment of the $K$--theory transfer seems to be missing from the literature, we give an essentially self-contained account below.  See \cite{Roush:1999fk} for a treatment of transfer in a more general context.  The treatment below is inspired by $KK$-theory, and could be developed completely in that context; we will not do this here.

\begin{transdef}\label{transdef}
Let $Y$ be a finite $CW$ complex with fundamental group $\Gamma$ and universal cover $\widetilde{Y}$.  Let $\Gamma_0$ be a finite index subgroup of $\Gamma$, and $Y_0$ the corresponding finite cover of $Y$.

Note that $Y_0$ is homeomorphic to the balanced product $\widetilde{Y}\times_\Gamma(\Gamma/\Gamma_0)$, whence $C(Y_0)$ is naturally isomorphic to 
$$
\mathcal{T}_Y^{Y_0}:=C_b(\widetilde{Y}\times(\Gamma/\Gamma_0))^\Gamma=C_b(\widetilde{Y}, C(\Gamma/\Gamma_0))^\Gamma.
$$
There is moreover clearly a right $C(Y)=C_b(\widetilde{Y})^\Gamma$ module-structure defined on $\mathcal{T}_Y^{Y_0}$. 

The \emph{transfer map}\footnote{It does of course agree with the more classical notion, as for example in \cite[Pages 7-8]{Roush:1999fk}, but we do not need this.} in $K$--theory, denoted $t\co K^*(Y_0)\to K^*(Y)$ is the homomorphism induced on finitely generated projective modules over $C(Y_0)$ by the formula
$$
E\mapsto E\otimes_{C(Y_0)}\mathcal{T}_Y^{Y_0}.
$$ 
\end{transdef}

The following simple lemma records some properties of the transfer map.

\begin{transprops}\label{transprops}
\begin{enumerate}
\item The $K$--theory transfer is well-defined.
\item Let $\pi:Y_0\to Y$ be a covering map and $t:K^*(Y_0)\to K^*(Y)$ be the corresponding transfer map as in Definition \ref{transdef} above.  Let $E_{\Gamma_0}$ be the ``flat''\footnote{$Y$ may not be a manifold, so this does not literally make sense.} bundle over $Y$ induced by the quasi-regular representation of $\Gamma$ on $l^2(\Gamma/\Gamma_0)$.  Then 
the composition 
$$
t\circ \pi^*\co K^*(Y)\to K^*(Y)
$$
is equal to the (internal) $K$--theory product with $[E_{\Gamma_0}]\in K^*(Y)$.  

In particular, $t\circ\pi^*$ is a rational isomorphism. 
\item With notation as in part (2), let $\rho:X\to \text{Rep}(\Gamma_0)$ be a family of representations.  Let 
$$
Ind(\rho):X\to \Rep
$$
be the family defined by ``pointwise induction''.\footnote{Pointwise induction arises from a choice of continuous induction map $\Hom(\Gamma_0, U(n)) \to \Hom(\Gamma, U(n[\Gamma : \Gamma_0]))$.  This map depends on a choice of coset representatives for $\Gamma_0$ in $\Gamma$.  A detailed discussion can be found in~\cite{Ramras-crystal}}
Then if $E_{\rho}$, $E_{Ind(\rho)}$ are the bundles over $Y_0\times X$ and $Y\times X$ defined by $\rho$, $Ind(\rho)$ respectively, we have that 
$$
t[E_\rho]=[E_{Ind(\rho)}]\in K^*(Y).
$$
\item For any finite covering $Y_0 \to Y$ and any space $X$, the transfer map for the product covering $Y_0 \cross X \to Y_0 \cross X$ satisfies $t (y \cross x) = t(y) \cross x$ for all $y\in K^* (Y_0)$ and $x\in K^* (X)$. 
\end{enumerate}
\end{transprops}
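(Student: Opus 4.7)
The common thread is to exploit the bimodule $\mathcal{T}_Y^{Y_0} = C_b(\widetilde{Y}, C(\Gamma/\Gamma_0))^\Gamma$, which carries commuting actions of $C(Y_0) \cong \mathcal{T}_Y^{Y_0}$ on the left and $C(Y) = C_b(\widetilde Y)^\Gamma$ on the right. For \textbf{(1)}, the plan is to show that $\mathcal{T}_Y^{Y_0}$ is finitely generated projective as a right $C(Y)$-module; then tensoring with it preserves the class of finitely generated projective modules and descends to a well-defined homomorphism on $K^0$. This is a partition-of-unity argument essentially identical to Lemma \ref{misbunlem}: cover $Y$ by finitely many open sets over which the finite cover $Y_0 \to Y$ trivializes as $[\Gamma:\Gamma_0]$ disjoint sheets, choose lifts, and use a subordinate partition of unity on $Y$ to exhibit $\mathcal{T}_Y^{Y_0}$ as a direct summand of a free $C(Y)$-module of rank $[\Gamma:\Gamma_0]$.

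For \textbf{(2)}, since $C(Y_0) \cong \mathcal{T}_Y^{Y_0}$ and $\pi^*$ is induced by the natural inclusion $C(Y) \hookrightarrow C(Y_0)$, for any finitely generated projective $C(Y)$-module $E$ we have
$$
(t \circ \pi^*)(E) \;=\; \bigl( E \otimes_{C(Y)} C(Y_0) \bigr) \otimes_{C(Y_0)} \mathcal{T}_Y^{Y_0} \;\cong\; E \otimes_{C(Y)} \mathcal{T}_Y^{Y_0},
$$
which is exactly the internal $K$-theory product of $[E]$ with the class of $\mathcal{T}_Y^{Y_0}$ as a right $C(Y)$-module. That module is precisely the module of sections of $\widetilde Y \times_\Gamma \mathbb{C}[\Gamma/\Gamma_0] = E_{\Gamma_0}$. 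For the rational isomorphism claim, $E_{\Gamma_0}$ is flat (coming from a unitary representation), so Chern--Weil theory forces its Chern character in positive degrees to vanish rationally; thus $\mathrm{ch}([E_{\Gamma_0}]) = [\Gamma:\Gamma_0] \in H^0(Y;\mathbb{Q})$, and through the rational Chern character isomorphism one concludes $[E_{\Gamma_0}] = [\Gamma:\Gamma_0]\cdot[\mathbf{1}]$ in $K^0(Y)\otimes\mathbb{Q}$. Internal product with this class is therefore multiplication by the nonzero integer $[\Gamma:\Gamma_0]$, hence a rational isomorphism.

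For \textbf{(3)}, the plan is to exhibit a natural isomorphism of vector bundles over $Y\times X$. The total space of $E_\rho$ is $(\widetilde Y \times X \times \mathbb{C}^n)/\Gamma_0$ (with $\Gamma_0$ acting via $\rho$), and pushing forward along $\pi\times\mathrm{id}$ amounts to taking the quotient of $\widetilde Y \times X \times (\mathbb{C}^n \otimes \mathbb{C}[\Gamma/\Gamma_0])$ by the full group $\Gamma$. After choosing coset representatives for $\Gamma_0$ in $\Gamma$, the resulting $\Gamma$-action on the fibre $\mathbb{C}^n \otimes \mathbb{C}[\Gamma/\Gamma_0]$ becomes precisely $\mathrm{Ind}(\rho_x)$, so that the module-theoretic identification $\Gamma(E_\rho) \otimes_{C(Y_0\times X)} \mathcal{T}_{Y\times X}^{Y_0\times X} \cong \Gamma(E_{\mathrm{Ind}(\rho)})$ is the classical Frobenius/balanced-product isomorphism, giving $t[E_\rho] = [E_{\mathrm{Ind}(\rho)}]$.

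Part \textbf{(4)} is immediate from the bimodule identification $\mathcal{T}_{Y\times X}^{Y_0\times X} \cong \mathcal{T}_Y^{Y_0} \otimes C(X)$, which holds because the cover $\pi\times\mathrm{id}_X$ is a product; tensoring with this bimodule then factors through the $Y$-variable and leaves an external factor $x\in K^*(X)$ untouched. I expect the main obstacle to be part \textbf{(3)}: writing down a manifestly natural bundle isomorphism between the pushforward of $E_\rho$ and $E_{\mathrm{Ind}(\rho)}$ requires careful bookkeeping with coset representatives and the explicit formulas defining the induced representation (which itself depends on such a choice), and checking compatibility with the $X$-parameter. The other parts are essentially routine once the bimodule picture in (1) is in place.
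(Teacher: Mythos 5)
Your proposal matches the paper's proof in both structure and substance: you identify $\mathcal{T}_Y^{Y_0}$ as a $C(Y_0)$-$C(Y)$-bimodule, establish that it is finitely generated projective as a right $C(Y)$-module to get well-definedness, compute $t\circ\pi^*$ as the internal product with $[E_{\Gamma_0}]$, identify the induced bundle via the balanced-product picture for (3), and use the product decomposition $\mathcal{T}_{Y\times X}^{Y_0\times X}\cong \mathcal{T}_Y^{Y_0}\otimes C(X)$ for (4). The one point you gloss over (and which the lemma's own footnote flags) is that $Y$ need not be a manifold, so the Chern--Weil argument in (2) does not apply directly; the paper fixes this by replacing $Y$ with a homotopy-equivalent (open) manifold, and you should likewise either invoke that reduction or observe that $E_{\Gamma_0}$ is pulled back along a map to the classifying space of the finite group $\Gamma/\ker$, whose positive-degree rational cohomology vanishes.
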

\begin{proof}
\begin{enumerate}
\item For the case of $K^0$, this follows from the fact that $\mathcal{T}_Y^{Y_0}$ is finitely generated and projective, both as a left $C(Y_0)$ module and as a right $C(Y)$ module.  The first of these is obvious - it is a free rank one module over $C(Y_0)$ - while the second follows from the fact that it is equal as a $C(Y)$ module to the sections of the bundle over $Y$ induced by the representation of $\Gamma$ on $l^2(\Gamma/\Gamma_0)$.  The case of higher $K$--groups can be considered by taking suspensions (this is probably most easily seen with the ``analysts suspension'' -- taking the tensor product with $C_0(\R)$, and using $K$--theory with compact supports).
\item The homomorphism $\pi^*:C(Y)\to C(Y_0)$ induces a left $C(Y)$ module structure on $C(Y_0)$; write $\Pi$ for the corresponding $C(Y)$--$C(Y_0)$ module.  The composition $t\circ \pi^*$ is then equal to the map on $K^*(Y)$ induced by taking tensor product with the (finitely generated, projective) $C(Y)$ module
$$
\Pi\otimes _{C(Y_0)}\mathcal{T}_Y^{Y_0};
$$
this module simply \emph{is} the sections of $E_{\Gamma_0}$, however.  

The remaining statement follows from Chern-Weil theory in case $Y$ and $Y_0$ are manifolds (not necessarily closed): indeed, rationally, taking the product with $E_{\Gamma_0}$ is simply multiplication by $|\Gamma/\Gamma_0|$.  The general case follows on replacing $Y$ by a homotopy equivalent  manifold (which need not be closed) and $Y_0$ with the corresponding cover.
\item  Assume for simplicity of notation that $X$ is connected (for the general case, consider each connected component separately), in which case we may assume that under each $\rho_x$, $\Gamma$ acts on some fixed $\C^k$.  The space of sections of $E_\rho$ is then given by
$$
C_b(\widetilde{Y}\times X,\C^k)^{\Gamma_0},
$$
(where the fixed points are taken for a $\Gamma_0$ action analogous to the $\Gamma$ action in Definition \ref{bundef}) while that for $E_{Ind(\rho)}$ is given by
$$
C_b(\widetilde{Y}\times X,C_b(\Gamma,\C^k)^{\Gamma_0})^\Gamma;
$$
it is not difficult to see that tensoring the former over $C(Y_0)$ by $\mathcal{T}_Y^{Y_0}$ yields the latter, which is the claim.

\item Let $E$, $F$ be finitely generated projective modules over $C(Y_0)$, $C(X)$ respectively (i.e.\ spaces of sections of bundles over the respective spaces).  It suffices to show that 
$$
(E\otimes F)\otimes_{C(X\times Y_0)} \mathcal{T}_{Y\times X}^{Y_0\times X}\cong (E\otimes_{C(Y_0)}\mathcal{T}_Y^{Y_0})\otimes F,
$$
which is a straightforward computation.
\end{enumerate}
\end{proof}

\begin{fifd}\label{fifd}
Say $\Gamma$ is a group with finite classifying space, and that $\Gamma_0$ is a finite index subgroup of $\Gamma$ in the class $\FD$.  Then $\Gamma$ is in the class $\FD$.
\end{fifd}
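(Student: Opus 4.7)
The idea is to pass to the $K$-theory side via Lemma \ref{dlem}, and then use the transfer map of Definition \ref{transdef} together with pointwise induction of representations to turn a detecting family for $\Gamma_0$ into one for $\Gamma$.

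Let $\pi: B\Gamma_0 \to B\Gamma$ be the finite cover classifying $\Gamma_0 \leq \Gamma$, let $t$ be its transfer, and set $n = [\Gamma:\Gamma_0]$. By Lemma \ref{dlem} it suffices to show every $y \in K^*(B\Gamma) \otimes \Q$ lies in $K^*_\FD(B\Gamma)$. Proposition \ref{transprops}(2) tells me that $t \circ \pi^*$ acts rationally as multiplication by $n$, so I can write $y = \tfrac{1}{n}\, t(\pi^* y)$. Applying Lemma \ref{dlem} to $\Gamma_0$ (which is in $\FD$), I express
\[
\pi^* y \;=\; (1 \otimes \phi)[E_{\rho^0}]
\]
for some family $\rho^0 : X \to \text{Rep}(\Gamma_0)$ and some linear functional $\phi : K^*(X) \otimes \Q \to \Q$. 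Let $\text{Ind}(\rho^0) : X \to \Rep$ denote the pointwise-induced family.

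The key step is to commute the transfer past $(1 \otimes \phi)$, i.e. to verify
\[
t\bigl((1 \otimes \phi)[E_{\rho^0}]\bigr) \;=\; (1 \otimes \phi)\bigl(t'[E_{\rho^0}]\bigr),
\]
where $t' : K^*(B\Gamma_0 \times X) \to K^*(B\Gamma \times X)$ is the transfer for the product cover. Writing $[E_{\rho^0}] = \sum_i a_i \times b_i$ via the rational K\"unneth isomorphism, Proposition \ref{transprops}(4) gives $t'[E_{\rho^0}] = \sum_i t(a_i) \times b_i$; applying $(1 \otimes \phi)$ to each of these expressions reduces both sides to $\sum_i \phi(b_i)\, t(a_i)$, confirming the identity. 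Since Proposition \ref{transprops}(3) identifies $t'[E_{\rho^0}]$ with $[E_{\text{Ind}(\rho^0)}]$, I conclude
\[
y \;=\; \tfrac{1}{n}\,(1 \otimes \phi)[E_{\text{Ind}(\rho^0)}] \;\in\; K^*_\FD(B\Gamma),
\]
which gives the result.

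I expect the only real obstacle to be the naturality check in the key step, i.e.\ confirming that the partial-evaluation operator $(1 \otimes \phi)$ commutes with the product-cover transfer. This is essentially forced by Proposition \ref{transprops}(4) once one decomposes along the K\"unneth isomorphism, but it does require some care to keep the several $K$-theoretic constructions aligned.
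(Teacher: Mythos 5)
Your proof is correct and follows essentially the same route as the paper's: reduce to $K^*_\FD$ via Lemma \ref{dlem}, use surjectivity of the transfer (part (2) of Lemma \ref{transprops}), commute $(1\otimes\phi)$ past the transfer via part (4), and identify $t'[E_{\rho^0}]$ with $[E_{\text{Ind}(\rho^0)}]$ via part (3). The only cosmetic differences are that you exhibit the explicit preimage $\tfrac{1}{n}\pi^* y$ rather than merely invoking surjectivity of $t$ (the $\tfrac{1}{n}$ then absorbs harmlessly into $\phi$), and you spell out the K\"unneth decomposition that underlies the commutativity of the transfer with $(1\otimes\phi)$, which the paper simply cites from part (4).
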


\begin{proof}
Let $B\Gamma$ be a finite classifying space for $\Gamma$, and $B\Gamma_0$ the finite cover of $B\Gamma$ corresponding to the inclusion $\Gamma_0\hookrightarrow\Gamma$, which is a finite classifying space for $\Gamma_0$.  Let $x$ be an element of $K^*(B\Gamma)\otimes\Q$; by Lemma \ref{dlem}, it suffices to show that $x$ is in $K^*_{\FD}(B\Gamma)$.  Now, by part (2) of Lemma \ref{transprops}, 
$$
t:K^*(B\Gamma_0)\otimes \Q\to K^*(B\Gamma)\otimes\Q
$$
is surjective (where, as usual, we have abused notation, writing `$t$' for `$t\otimes Id_\Q$'), whence there exists $y\in K^*(B\Gamma_0)\otimes \Q$ with $t(y)=x$.  As $\Gamma_0$ is in the class $\FD$, and by Lemma \ref{dlem} again, there exist $\rho:X\to \text{Rep}(\Gamma_0)$ and $\phi:K^*(X)\otimes \Q\to \Q$ such that $y=(1\otimes \phi)[E_\rho]$.  To complete the proof, note that the diagram 
$$
\xymatrix {K^*(B\Gamma_0\times X)\otimes \Q\ar[r]^t \ar[d]^{1\otimes \phi} & K^*(B\Gamma\times X)\otimes \Q \ar[d]^{1\otimes \phi} \\
K^*(B\Gamma_0)\otimes \Q \ar[r]^t & K^*(B\Gamma)\otimes \Q }
$$ 
commutes by part (4) of Lemma~\ref{transprops}, whence using part (3) of Lemma \ref{transprops}
$$
x=t(y)=t((1\otimes \phi)[E_\rho])=(1\otimes \phi)(t[E_\rho])=(1\otimes \phi)[E_{Ind(\rho)}]
$$
and we are done.
\end{proof}

\subsection*{Surface groups}

Our next goal is to show that fundamental groups of compact, aspherical surfaces are in $\FD$.  This will rely on Yang--Mills theory, and in particular on a result of the first author from~\cite{Ramras-stable-moduli}.  To begin, we need to analyze the classifying map for the bundle $E_\rho$ associated to a family of representations.  In order to do this, we will need to consider a functorial model $\B (-)$ for classifying spaces, e.g. Milnor's infinite join construction~\cite{Milnor} or Segal's simplicial model~\cite{Segal}.  These have the property that homomorphisms $\rho\co G\to H$ induce continuous maps $\B(\rho) \co \B G \to \B H$, and in fact if $G$ and $H$ are topological groups, this gives rise to a continuous map
$$\Hom(G, H) \srm{\B} \Map_* (\B G, \B H).$$
(Continuity of this map is most easily checked using Segal's model, which gives a model for the classifying space so long as $G$ and $H$ are Lie groups, which suffices for our purposes.)
A continuous map $\rho\co X\to \Hom(G, H)$ now has an associated map $\B \circ \rho\co X\to \Map_* (\B G, \B H)$, which has an adjoint $X\cross \B G \to \B H$ (this adjoint is continuous so long as $X$ is locally compact and Hausdorff, e.g. if $X$ is a CW complex).  We will denote this adjoint by $\rho^\vee$.
 The functorial model $\B (-)$ has an associated functorial model $\E (-)$ for the universal bundle, so that $\E G \to \B H$ is a universal  (left) principal $G$--bundle.  Moreover, there is a continuous mapping $\Hom(G, H) \srm{\E} \Map (\E G, \E H)$, such that for any $\rho\co G\to H$, $\E (\rho)\co \E G\to \E H$ is $\rho$--equivariant in the sense that $\E (\rho) (g\cdot e) = \rho(g) \cdot \E(\rho) (e)$.  Moreover, the diagram
\begin{equation}\label{Erho-Brho}
\xymatrix{ \E G \ar[d] \ar[r]^-{\E(\rho)} & \E H\ar[d]\\
		\B G \ar[r]^-{\B(\rho)} & \B H}
\end{equation}
commutes for each $\rho\co G\to H$.
 

\begin{classifying-map}$\label{classifying-map}$
Let $\rho \co X\to \Hom(\Gamma, U(n))$ be an $X$--family of representations.  Let $B\Gamma$ be a finite model for the classifying space of $\Gamma$, and let $f\co B\Gamma\to \B \Gamma$ be a classifying map for the universal $\Gamma$--bundle $E\Gamma\to B\Gamma$.  Then the composite map 
$$B\Gamma \cross X \xmaps{f\cross \Id_X} \B \Gamma \cross X \srm{\rho^\vee} \B U(n)$$
is a classifying map for the principal $U(n)$--bundle associated to $E_\rho$.
\end{classifying-map}
\begin{proof}
The principal $U(n)$--bundle associated to $E_\rho$ is simply 
$$\xymatrix{(E\Gamma \cross X \cross U(n))/\Gamma \ar[d] \\ B\Gamma \cross X},$$ 
where $\Gamma$ acts by $g \cdot (e, x, A) = (g \cdot e, x,  \rho_x (g) A)$.  This can be viewed as a \e{left} principal $U(n)$--bundle, via the action $A \cdot [e, x, B] = [e, x, BA^{-1}]$.  There is then an analogous left principal $U(n)$--bundle over $\B\Gamma \cross X$, formed by replacing $E\Gamma$ with $\E\Gamma$ in the previous construction.
We will construct a commutative diagram of left principal $U(n)$--bundles as follows:
$$\xymatrix{
(E\Gamma \cross X \cross U(n))/\Gamma \ar[rr]^{\tilde{f} \cross \Id \cross \Id} \ar[d] &
	& (\E\Gamma \cross X \cross U(n))/\Gamma \ar[d] \ar[r]^-\alpha
	& \E U(n) \ar[d]\\
B\Gamma \cross X \ar[rr]^-{f\cross \Id} & & \B \Gamma \cross X \ar[r]^-{\rho^\vee} & \B U(n)
}$$
The map $\tilde{f}\co E\Gamma\to \E \Gamma$ is the unique map of principal bundles covering $f$, and hence the left-hand square commutes by construction.  Moreover, $\tilde{f}\cross \Id\cross \Id$ induces a $U(n)$--equivariant map between these quotient spaces, and hence the left-hand square is a pull-back diagram of principal $U(n)$--bundles.  The map $\alpha$ is defined by
$$\alpha ([e, x, A]) = A^{-1}\cdot \E(\rho_x) (e).$$
It follows from the properties of $\E(\rho_x)$ listed above that this is a $U(n)$--equivariant map, and that the right-hand diagram commutes.  Thus the right-hand square is also a pullback diagram of principal $U(n)$--bundles, completing the proof.
\end{proof}



The next technical-but-simple lemma comes down to the relationship of the `analysts suspension' $X\times \R$ and the `topologists suspension' $X\sm S^1$, and a way of making sense of the slant product on reduced $K$--theory and $K$--homology.

\begin{pairing-lemma}\label{pairing-lemma}
Let $\Gamma$ be a group with a finite model $B\Gamma$ for its classifying space.  Let $x$ be an element of $\wt{K}_i(B\Gamma)$ which is non-zero after tensoring with $\Q$.  Then for each $k\geq 0$ there exists 
$$
y_k\in \wt{K}^i (B\Gamma\sm S^{2k+i})=\wt{K}^0(B\Gamma\sm S^{2k})
$$ 
such that if 
$$
\pi:B\Gamma\times S^{2k+i}\to B\Gamma\sm S^{2k+i}
$$ 
is the natural quotient map then the slant product $\pi^*(y_k)/x$ is a well-defined element of $K^*(S^{2k+i})$ and is non-zero.
\end{pairing-lemma}

\begin{proof} 
Let $x_0\in B\Gamma$ and $\infty\in S^{2k+i}$ be the respective basepoints, and identify $S^{2k+i}\backslash \{\infty\}$ with $\R^{2k+i}$.  Recall that the $K$--theory (respectively, $K$--homology) of a locally compact, non-compact space $Y$ is identified with the reduced $K$--theory (resp.\ $K$--homology) of the one point compactification $Y^+$, which is in turn identified with $K_*(C_0(Y))$ (resp.\ $K^*(C_0(Y))$).  The statement of the lemma can thus be rewritten as follows: for any rationally non-trivial element $x\in K_i(C_0(B\Gamma\backslash\{x_0\}))$ and any $k\geq 0$ there exists 
$$
y_k\in K^i(C_0(B\Gamma\backslash \{x_0\})\otimes C_0(\R^{2k+i}))
$$ 
such that if 
$$
\iota:C_0(B\Gamma\backslash \{x_0\})\otimes C_0(\R^{2k+i})\to C_0(B\Gamma\times S^{2k+i}\backslash \{(x_0,\infty)\})
$$ 
is the natural inclusion then 
$$
0\neq \iota_*(y_k)/x\in K^0(S^{2k+i})
$$
(here we think of $K_*(C_0(B\Gamma\times S^{2k+i}\backslash \{(x_0,\infty)\}))$ as a subring of $K_*(C(B\Gamma\times S^{2k+i}))$ to make sense of the slant product in the above). 

This is not difficult, however: take any element $z\in K^i(C_0(B\Gamma))$ such that $x/z=\langle x,z\rangle$ is non-zero (which exists by rational non-degeneracy of the pairing), let $y_k=z\otimes b$, $b\in K^0(\R^{2k+i})$ the Bott generator, and apply Lemma \ref{slem} part 2.
\end{proof}

\begin{B-surj-fd-thm}\label{B-surj-fd-thm}
Let $\Gamma$ be a group with a finite model $B\Gamma$ for its classifying space.  Say there exists $K>0$ such that 
for each $k>K$, there exists $N=N(k)>0$ such that for $n>N$,
the natural map
$$\pi_k \Hom(\Gamma, U(n)) \srm{\B_*} \pi_k \Map_* (\B\Gamma, \B U(n))$$
is surjective.  Then $\Gamma\in \FD$.
\end{B-surj-fd-thm}

\begin{proof}  We need to show that each $K$--homology class on $B\Gamma$ is flatly detectable.  Since the unit element is detected by the trivial representation, it will suffice to work with reduced $K$--homology.  
By Lemma~\ref{pairing-lemma}, we know that for each rationally non-zero $x\in \wt{K}_i (B\Gamma)$ and each $k>0$, there exists $y_k\in \wt{K}^0 (B\Gamma\sm S^{2k+i})$ such that $\pi^*(y_k)/x$ is non-zero, where $\pi$ is the quotient map 
$B\Gamma \cross S^{2k+i} \to B\Gamma \sm S^{2k+i}$.

Choose $k$ large enough that $2k+i> K$, and let $N = N(2k+i)$ be the number guaranteed by the hypothesis.  Now $y_k$ has the form $y_k = [V] - [W]$ for some bundles $V, W$ over $B\Gamma \sm S^{2k+i}$.  The bundles $V$ and $W$ are then classified by maps $\alpha_V, \alpha_W \co B\Gamma \sm S^{2k+i} \to \B U(n)$ for some $n$, and we may assume that $n>N$.  Moreover, we can assume these maps are based, and hence correspond to classes 
$\alpha_V, \alpha_W \in \pi_{2k+i} \Map_* (B\Gamma, \B U(n))$.  Choose a classifying map $f\co B\Gamma\to \B\Gamma$ for the universal bundle $E\Gamma\to B\Gamma$,  and a homotopy inverse $g\co \B\Gamma \to B\Gamma$ (note that $g$ classifies $\E \Gamma$).  By abuse of notation, $g$ will also denote the induced map $\B\Gamma \sm S^{2k+i} \to B\Gamma \sm S^{2k+i}$ and the map $\Map_* (B\Gamma, \B U(n)) \to \Map_*(\B\Gamma, \B U(n))$ induced by pre-composition with $g$.
Then $g^* V$ and $g^*W$ are classified by the adjoints of the elements $g_* \alpha_V$ and $g_* \alpha_W$ (respectively).

Our hypothesis now yields classes $\rho_V, \rho_W \in \pi_{2k+i} \Hom(\Gamma, U(n))$ 
such that $\B_* \rho_V = g_*\alpha_V$ and $\B_* \rho_W= g_*\alpha_W$.  By Lemma~\ref{classifying-map} the bundle $E_{\rho_V}$ is classified by the map 
$$B\Gamma \cross S^{2k+i} \xmaps{f\cross \Id} \B\Gamma \cross S^{2k+i} \xmaps{\rho_V^\vee} \B U(n).$$
By definition, $\rho_V^\vee = (b, z) = \alpha_V (z) (g(b))$.  So in fact, $E_{\rho_V}$ is classified by 
$$B\Gamma \cross S^{2k+i} \xmaps{f\cross \Id} \B\Gamma \cross S^{2k+i} \xmaps{g\cross \Id} B\Gamma \cross S^{2k+i}
\xmaps{\pi} B\Gamma \sm S^{2k+i} \srm{\alpha_V} \B U(n)$$
Since $g\circ f$ is homotopic to the identity, we conclude that
 $E_{\rho_V} \isom \pi^* V$.  Similarly, we have  $E_{\rho_W} \isom \pi^* W$.

Since $y_k = ([V] - [W])$, we have 
$$0\neq (\pi^*y_k) / x = ([\pi^* V] - [\pi^* W])/x = ([E_{\rho_V}] - [E_{\rho_W}])/x,$$ 
so we must have either 
$(\rho_V)_* (x) = [E_{\rho_V}]/x  \neq 0$ or $(\rho_W)_* (x) = [E_{\rho_W}]/x  \neq 0$; in either case we conclude that $x$ is flatly detectable.
\end{proof}

\begin{sgfd}\label{sgfd}
Let $M^2$ be a compact, aspherical surface (possibly with boundary).  Then $\pi_1 M^2 \in \FD$.
\end{sgfd}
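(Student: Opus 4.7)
The plan is to split into cases based on the topology of $M^2$, dispatching all but the hyperbolic case via the permanence properties already established.

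If $M^2$ has non-empty boundary, then $M^2$ deformation retracts onto a finite graph, so $\pi_1 M^2$ is a finitely generated free group. Since $\Z \in \FD$ by Proposition \ref{zfd}, iterated application of Lemma \ref{fpfd} shows that all such free groups lie in $\FD$.

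Now assume $M^2$ is closed. If $\chi(M^2) = 0$, then $M^2$ is either the $2$--torus or the Klein bottle: in the torus case $\pi_1 M^2 \isom \Z \times \Z$, which lies in $\FD$ by Proposition \ref{dpfd}; for the Klein bottle, $\pi_1 M^2$ contains $\Z \times \Z$ as an index-$2$ subgroup (the torus being the orientation double cover), so Lemma \ref{fifd} applies.

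The remaining case is a closed aspherical surface with $\chi(M^2)<0$, i.e.\ an orientable surface of genus $\geq 2$ or a non-orientable surface of genus $\geq 3$. Here I would invoke Theorem \ref{B-surj-fd-thm}, whose hypothesis reduces to verifying, for each $k \geq 1$ and for all $n$ above some $N(k)$, that the natural map
$$\pi_k \Hom(\pi_1 M^2, U(n)) \maps \pi_k \Map_*(M^2, BU(n))$$
is surjective. This surjectivity is supplied by the main result of \cite{Ramras-stable-moduli}, which uses Morse theory for the Yang--Mills functional on the space of unitary connections over $M^2$ to identify the stable limit $\Hom(\pi_1 M^2, U)$ with $\Map_*(M^2, BU)$; standard stabilization in $n$ then passes this to the required surjectivity at finite $n$. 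The main obstacle is precisely this last step, as it depends on the substantial Yang--Mills/gauge-theoretic analysis of \cite{Ramras-stable-moduli}; everything else is a routine application of the machinery developed earlier in this section.
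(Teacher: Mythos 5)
Your proof is correct and follows the same essential strategy as the paper's, with one organizational difference: you dispatch the Euler characteristic zero closed cases (torus and Klein bottle) via the permanence lemmas (Propositions \ref{dpfd} and \ref{fifd}), whereas the paper handles \emph{all} closed aspherical surfaces uniformly by invoking Theorem~\ref{B-surj-fd-thm} together with \cite[Theorem 3.4]{Ramras-stable-moduli}. Both routes are valid; the paper's is slightly more economical since the Yang--Mills result applies to all closed aspherical surfaces, not just hyperbolic ones, while your case analysis has the pedagogical virtue of reducing the reliance on gauge theory to the genuinely hyperbolic case. For the free (boundary) case, the paper actually notes that your argument via Proposition~\ref{zfd} and Lemma~\ref{fpfd} is available, but then gives instead a direct verification of the hypothesis of Theorem~\ref{B-surj-fd-thm} using the weak equivalence $U(n)^m \heq \Map_*(\bigvee_m S^1, \B U(n))$, again for uniformity.

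One imprecision worth flagging in your closed hyperbolic case: you phrase the input from \cite{Ramras-stable-moduli} as a \emph{stable} identification $\Hom(\pi_1 M^2, U) \heq \Map_*(M^2, BU)$ from which ``standard stabilization in $n$'' would recover the required surjectivity at finite $n$. The logic actually runs the other way. Theorem~\ref{B-surj-fd-thm} needs, for each $k$ and all sufficiently large $n$, surjectivity of $\pi_k \Hom(\Gamma, U(n)) \to \pi_k \Map_*(\B \Gamma, \B U(n))$, and one cannot in general descend from a colimit statement to an explicit finite-$n$ range without quantitative control on the rate of stabilization of the \emph{domain} $\Hom(\Gamma, U(n))$. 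What \cite[Theorem 3.4]{Ramras-stable-moduli} actually supplies, and what the paper cites, is precisely such a quantified finite-$n$ statement: the map is an isomorphism on $\pi_*$ for $0<*<n-1$. The stable identification is then a consequence, not the source, of this. Since the content you appeal to is correct and properly attributed, this is a mis-description of the cited theorem rather than a gap, but as written the step ``standard stabilization passes this to finite $n$'' is not a legitimate deduction.
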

\begin{proof}
If $\partial M^2\neq \emptyset$, then $\pi_1 M^2$ is isomorphic to a finitely generated free group $F_m$.  This case follows from Proposition \ref{zfd} and Lemma \ref{fpfd} above, but we give a different proof here for the sake of variety.

Note then that the natural map
$$\Hom(F_m, U(n)) = U(n)^m \maps \Map_* (\B F_m, \B U(n))$$
is a weak equivalence for each $n$: this map can be identified with the natural weak equivalence $U(n)^m \heq (\Omega \B U(n))^m = \Map_* (\bigvee_m S^1, \B U(n))$, using the fact that $\B F_m \heq \bigvee_m S^1$.  For further details, see Ramras~\cite[Proof of Theorem 4.3]{Ramras-stable-moduli}.  Thus $F_m$ satisfies the hypotheses of Theorem~\ref{B-surj-fd-thm}.

The case of closed, aspherical surfaces follows from Theorem~\ref{B-surj-fd-thm} together with~\cite[Theorem 3.4]{Ramras-stable-moduli}, which states that the natural map
$$\Hom(\pi_1 M^2, U(n))   \maps \Map_* (\B \pi_1 M^2, \B U(n))$$
induces an isomorphism on homotopy groups in dimensions $0<* < n-1$.  
\end{proof}

\subsection*{Closing remarks on $\FD$}

To complete this section, we discuss the class $\FD$ a little more broadly, and give an application to computing the Betti numbers of representation varieties.

For the readers' convenience, the next corollary summarizes what are perhaps the most natural examples we know to be in the class $\FD$.

\begin{fdex}\label{fdex}
The following classes of groups are in the class $\FD$:
\begin{itemize}
\item Finitely generated free groups.
\item Finitely generated free abelian groups.
\item Torsion free crystallographic groups.
\item Fundamental groups of compact, aspherical surfaces.  \qed
\end{itemize}
\end{fdex}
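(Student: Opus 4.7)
The proof is essentially a matter of assembling the permanence properties and examples already established in this section. The plan is to dispatch each bullet in turn, in the order they are listed.

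For finitely generated free groups, I would first note that $\Z \in \FD$ by Proposition \ref{zfd}, and then observe that the free group $F_m$ of rank $m$ is the iterated free product $\Z * \Z * \cdots * \Z$. Applying Lemma \ref{fpfd} (closure of $\FD$ under free products) inductively on $m$ gives $F_m \in \FD$. For the free abelian case, I would proceed identically, writing $\Z^m = \Z \times \Z \times \cdots \times \Z$ and invoking Proposition \ref{dpfd} (closure under direct products) in place of Lemma \ref{fpfd}.

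For torsion-free crystallographic groups, the key input from outside the paper is Bieberbach's theorem: every torsion-free crystallographic group $\Gamma$ contains a finite-index subgroup $\Gamma_0$ isomorphic to some $\Z^n$. By the previous bullet, $\Gamma_0 \in \FD$, and so Lemma \ref{fifd} (closure under finite-index supergroups) yields $\Gamma \in \FD$. One should also note that since $\Gamma$ is torsion-free and virtually $\Z^n$, it admits a finite classifying space (a compact flat manifold), so the hypothesis of Lemma \ref{fifd} is satisfied.

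The final bullet on fundamental groups of compact, aspherical surfaces is precisely the content of Corollary \ref{sgfd}, so no further work is required. I do not anticipate a genuine obstacle here; the only substantive external fact is Bieberbach's structure theorem for crystallographic groups, and the remainder is a straightforward citation exercise.
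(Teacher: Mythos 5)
Your proof is correct and follows exactly the route the paper intends: $\Z\in\FD$ from Proposition~\ref{zfd}, then iterated applications of Lemma~\ref{fpfd} and Proposition~\ref{dpfd} for free and free abelian groups respectively, Bieberbach plus Lemma~\ref{fifd} for torsion-free crystallographic groups (and you correctly note the finite-classifying-space hypothesis is met via compact flat manifolds), and Corollary~\ref{sgfd} for surface groups. No gaps; this is the intended citation exercise.
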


On the other hand, the class $\FD$ seems likely to be quite restrictive.  The next result gives an explicit example of some groups that are not in the class $\FD$, and in particular shows that $\FD$ is not closed under free products with amalgam.  

Recall first that Burger and Mozes \cite{Burger:2000vn} have shown that there exist (infinitely many) groups $\Gamma$ with the following three properties.
\begin{enumerate}
\item $\Gamma$ is equal to a free product with amalgam $F*_GF$, where $F$ and $G$ are non-abelian finitely generated free groups, and $G$ is embedded in $F$ as a finite index subgroup in two different ways.
\item There is a classifying space for $\Gamma$ which is a two-dimensional finite $CW$ complex.
\item $\Gamma$ is simple.
\end{enumerate}

\begin{bmprop}\label{bmprop}
Let $\Gamma$ be one of the groups constructed by Burger and Mozes with the properties listed 1, 2, 3 above.   Then $\Gamma$ is not in $\FD$. 
\end{bmprop}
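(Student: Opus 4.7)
The plan is to use simplicity of $\Gamma$ to show that every finite--dimensional unitary representation of $\Gamma$ is trivial, conclude that all detecting maps $\rho_*$ annihilate reduced $K$--homology, and then exhibit a non--zero class in $\tilde K_*(B\Gamma)\otimes\Q$ via Mayer--Vietoris.

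First, I would argue that every $\rho\co \Gamma\to U(n)$ is trivial.  Its kernel is a normal subgroup of the simple group $\Gamma$, so equals $\{e\}$ or $\Gamma$.  If trivial, then $\rho$ embeds $\Gamma$ as a finitely generated subgroup of $GL_n(\C)$, which by Mal'cev's theorem is residually finite.  But property (2) forces $\Gamma$ to be infinite (a finite group has no finite--dimensional $B\Gamma$), and an infinite simple group cannot be residually finite: every finite--index normal subgroup must equal all of $\Gamma$, since if it were trivial the whole group would be finite.  Hence $\ker\rho=\Gamma$ and $\Rep$ is a countable discrete space consisting of one trivial representation in each dimension.

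Next I would examine the bundles arising from families.  Any $\rho\co X\to\Rep$ on a finite CW complex $X$ is locally constant; on each connected component $X_i\subseteq X$ it has constant value ``trivial $\Gamma\to U(k_i)$'' for some $k_i$.  Definition~\ref{bundef} then gives $E_\rho|_{B\Gamma\times X_i}\cong \C^{k_i}$, so
$$
[E_\rho]\;=\;1_{B\Gamma}\times [F]\;\in\; K^0(B\Gamma\times X)
$$
for some $[F]\in K^0(X)$, where $1_{B\Gamma}$ is the class of the trivial line bundle.  Applying Lemma~\ref{slem}(2) with $A=\C$, $B=C(B\Gamma)$, $C=C(X)$ yields
$$
\rho_*(w)\;=\;(1_{B\Gamma}\times [F])/w\;=\;[F]\otimes (1_{B\Gamma}/w)
$$
for every $w\in K_*(B\Gamma)$.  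By Example~\ref{pairex} the scalar $1_{B\Gamma}/w$ is the augmentation of $w$ under $K_*(B\Gamma)\to K_*(\mathrm{pt})$, which vanishes on reduced classes.  Hence $\rho_*$ annihilates $\tilde K_*(B\Gamma)\otimes\Q$ for every family $\rho$.

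It remains to produce a non--zero element of $\tilde K_*(B\Gamma)\otimes\Q$.  Being non--abelian and simple, $\Gamma$ is perfect, so $H_1(B\Gamma;\Q)=0$; combined with $\dim B\Gamma=2$, the rational Chern character identifies $\tilde K_*(B\Gamma)\otimes\Q$ with $H^2(B\Gamma;\Q)\cong \Q^{\chi(B\Gamma)-1}$.  Mayer--Vietoris applied to the splitting $B\Gamma\simeq BF\cup_{BG}BF$, together with Schreier's rank formula $\mathrm{rk}(G)=d(r-1)+1$ (where $r=\mathrm{rk}(F)$ and $d=[F:G]$), gives $\chi(B\Gamma)=2\chi(BF)-\chi(BG)=2-d+r(d-2)$, which exceeds $1$ for the Burger--Mozes parameters of interest (e.g.\ $r,d\geq 3$).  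Selecting such a $\Gamma$ produces a non--zero rational reduced $K$--homology class that no family of representations can detect, and therefore $\Gamma\notin\FD$.  The main obstacle is this last step, which is where the specific Burger--Mozes geometry enters beyond the abstract properties (1)--(3): one must check that the chosen example has $\chi(B\Gamma)>1$.  The two preceding steps are uniform and use only simplicity, infiniteness, and the existence of a finite $B\Gamma$.
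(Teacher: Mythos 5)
Your proof follows essentially the same strategy as the paper's: simplicity plus Mal'cev's theorem forces all finite-dimensional unitary representations of $\Gamma$ to be trivial, so families detect nothing beyond the unit class, and then Mayer--Vietoris for the amalgam $F*_G F$ shows $\widetilde{K}_0(B\Gamma)\otimes\Q\neq 0$. Your treatment of the middle step is a bit more explicit than the paper's (you compute $[E_\rho]=1_{B\Gamma}\times[F]$ and invoke Lemma~\ref{slem}(2) and Example~\ref{pairex} directly, where the paper simply asserts the conclusion), which is a nice clarification. For the last step, you reformulate the paper's rank argument in terms of $\chi(B\Gamma)>1$, i.e.\ $g>2f$ where $g=\mathrm{rk}(G)$, $f=\mathrm{rk}(F)$; this is worth commenting on, because it is in fact the \emph{correct} sufficient condition. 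The paper's own phrasing --- ``this forces $g>f$. It follows that the map $\phi\oplus\psi\co\Q^g\to\Q^f\oplus\Q^f$ cannot be injective'' --- is not quite right as a pure dimension count: surjectivity of the rational transfer gives $\dim\ker\phi=\dim\ker\psi=g-f$, and two such subspaces of $\Q^g$ can have trivial intersection whenever $g\leq 2f$, so one really needs $g>2f$ (equivalently $d(f-1)+1>2f$, which fails e.g.\ when $d=[F:G]=2$). The actual Burger--Mozes examples have large enough parameters that $g>2f$ holds, so the conclusion is unaffected, but you are right to flag this as the place where the specific construction enters and the abstract properties (1)--(3) alone do not obviously suffice.
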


\begin{proof}
Infinite, finitely generated, simple groups have no non-trivial finite-dimensional representations (due to the fact that finitely generated linear groups are residually finite), whence the only flatly detectable classes in $K_*(B\Gamma)$ are multiples of the unit class.  It thus suffices to show that $\wt{K}_0(B\Gamma)\otimes \Q$ is non-trivial.

Consider the Mayer-Vietoris sequence associated to a free product decomposition in point 1 above \cite[Corollary 7.7]{Brown:1982kx}, part of which is
$$
0\to H_2(\Gamma;\Q)\to H_1(G;\Q)\to H_1(F;\Q)\oplus H_1(F;\Q) \to\cdots
$$
Let $g$ denote the rank of $G$ and $f$ the rank of $F$, so the above gives a sequence 
$$
0\to H_2(\Gamma;\Q)\to \Q^g\to \Q^f\oplus \Q^f \to\cdots,
$$
where moreover the map $\Q^g\to \Q^f\oplus \Q^f$ is of the form $\phi\oplus\psi$ for some maps $\phi,\psi:\Q^g\to \Q^f$.  
Multiplicativity of Euler characteristics under taking finite (graph) covers implies that 
$$
1-g=[F:G](1-f),~~\text{i.e.} ~~g=[F:G](f-1)+1
$$
(although two different embeddings of $G$ into $F$ are used, the two images of $G$ in $F$ necessarily have the same index - this follows by the above formula - so the notation $[F:G]$ is unambiguous). As the index $[F:G]$ and the rank $f$ are both larger than $1$, this forces $g>f$.  It follows that the map
$$
\phi\oplus\psi:\Q^g\to \Q^f\oplus \Q^f
$$
cannot be injective, and thus that $H_2(B\Gamma;\Q)=H_2(\Gamma;\Q)$ is not-zero.  The Chern character isomorphism now implies that $\wt{K}_0(B\Gamma)\otimes \Q$ is non-trivial, and we are done.
\end{proof}

Note that the strong Novikov conjecture is certainly true for $\Gamma$ as above, however, e.g.\ by using the results of \cite{Yu:1998wj}.  Other examples similar to the groups of Burger and Mozes could be extracted from work of Wise \cite{Wise:1995kx}, and another source of similar examples is explained in Ramras~\cite[Section 2.1]{Ramras-surface}.

We suspect the following groups are also not in the class $\FD$, although we were unable to prove this (and would be happy to be proved wrong!).

\begin{fdnex}\label{fdnex}
Are the following groups in the class $\FD$?
\begin{itemize}
\item The integral Heisenberg group.
\item Infinite property (T) groups.
\end{itemize}
\end{fdnex}

The case of property (T) groups seems plausible as any family of finite dimensional representations (parametrized by a connected space) consists of representations that are all mutually equivalent, as is essentially contained in \cite[Theorem 1.2.5]{Bekka:2000kx}.  The reason this does not yield a proof is that it does not preclude the existence of interesting topology \emph{within} each equivalence class of  representations.

The questions below seem natural and interesting.

\begin{fdq}\label{fdq}
\begin{itemize}
\item It follows from Proposition \ref{bmprop} that there exist free products with amalgam $\Gamma=\Gamma_1*_A\Gamma_2$ such that $\Gamma_1,\Gamma_2,A$ are all in $\FD$, but $\Gamma$ is not.  Are there `reasonable' conditions on a free product with amalgam which imply it is in $\FD$?
\item Which torsion free one relator groups are in $\FD$ (possibly all)?
\item Which three-manifold groups are in $\FD$ (possibly all)?
\item One could define a larger version of the class $\FD$ by considering families of \emph{quasi-representations}, i.e.\ maps $\Gamma\to U(n)$ that agree with a homomorphism on a given set of generators up to some $\epsilon$.  What sort of groups have this weaker property (which should also imply the strong Novikov conjecture)?  Recent work of Dadarlat \cite{Dadarlat:2011uq,Dadarlat:2011kx}\footnote{We would like to thank Marius Dadarlat for sharing these preprints with us.} investigating assembly and quasi-representations (among other things) seems very relevant here.
\end{itemize}
\end{fdq}

We end this section by noting some consequences of our results for the topology of unitary representation varieties.  The identity map $\Id$ on $\Hom(\Gamma, U(n))$ may be viewed as the \e{universal} $n$--dimensional family of representations, and we denote the associated bundle by $\mathcal{U} = E_\Id \to \Hom(\Gamma, U(n)) \cross B\Gamma$.  

\begin{universal} \label{universal}  If $\Gamma$ is in $\FD$, then for sufficiently large $n$ there is a rationally injective map
$$K_* (B\Gamma) \maps K^* (\Hom(\Gamma, U(n)))$$
given by $x\mapsto \mathcal{U}/x$.  Consequently, the sum of the (even, or odd) Betti numbers of $\Hom(\Gamma, U(n))$ is at least that of $\Gamma$.
\end{universal}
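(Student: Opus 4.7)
The plan is to combine a naturality identity for the slant product, a padding trick using trivial representations, and a monotonicity argument in the unitary dimension $n$.

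For any family $\rho\co X\to \Hom(\Gamma, U(n))$, Definition~\ref{bundef} shows directly that $E_\rho = (\rho\times \Id_{B\Gamma})^\ast \mathcal{U}$, so Lemma~\ref{slem}(1) applied to the induced $\ast$-homomorphism $\rho^\ast\co C(\Hom(\Gamma, U(n)))\to C(X)$ yields $\rho_\ast(x) = \rho^\ast(\mathcal{U}/x)$ for every $x\in K_\ast(B\Gamma)$.  In particular, $\mathcal{U}/x\ne 0$ whenever some family $\rho$ into $\Hom(\Gamma, U(n))$ detects $x$.  Also, for $\rho\co X\to \Hom(\Gamma, U(k))$ with $k\le n$, the padded family $\tilde\rho_x := \rho_x\oplus \mathbf{1}_{n-k}$ satisfies $E_{\tilde\rho} = E_\rho\oplus \mathbb{C}^{n-k}$; applying Lemma~\ref{slem}(2) to $[\mathbb{C}] = [\mathbb{C}_X]\otimes[\mathbb{C}_{B\Gamma}]$ gives $[\mathbb{C}]/x = [\mathbb{C}_X]\cdot \langle 1, x\rangle$, which vanishes on reduced $K$--homology.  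Hence $\tilde\rho_\ast = \rho_\ast$ on $\wt K_\ast(B\Gamma)\otimes \Q$, so a family into $\Hom(\Gamma, U(k))$ detecting a reduced class $x$ also detects $x$ after padding up to any larger dimension.

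The crux of the argument is a monotonicity step on reduced $K$--homology.  Set
$$V_n := \ker\Bigl(\mathcal{U}_n/(\cdot)\co \wt K_\ast(B\Gamma)\otimes\Q\to K^\ast(\Hom(\Gamma, U(n)))\otimes\Q\Bigr),$$
where $\mathcal{U}_n$ denotes the universal bundle at level $n$.  Viewing the padding inclusion $\iota\co \Hom(\Gamma, U(n))\hookrightarrow \Hom(\Gamma, U(n+1))$ as a family yields $(\iota\times\Id)^\ast \mathcal{U}_{n+1} = \mathcal{U}_n\oplus\mathbb{C}$; combined with the padding analysis above this gives $\iota^\ast(\mathcal{U}_{n+1}/x) = \mathcal{U}_n/x$ for reduced $x$, and hence $V_{n+1}\subseteq V_n$.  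Because $B\Gamma$ is a finite $CW$ complex, $\wt K_\ast(B\Gamma)\otimes\Q$ is finite-dimensional, so this decreasing chain stabilizes at some $V_\infty$.  A nonzero $x\in V_\infty$ would, by the $\FD$ hypothesis, be detected by some family into $\Hom(\Gamma, U(k))$; padding to $U(n)$ for $n\ge k$ with $V_n = V_\infty$ and applying the naturality identity would yield the contradiction $0\ne \tilde\rho_\ast(x) = \tilde\rho^\ast(\mathcal{U}_n/x) = 0$.  Hence $V_\infty = 0$, and $V_n = 0$ for all sufficiently large $n$.

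To promote this from the reduced to the full $K$--homology, I would use the splitting $K_0(B\Gamma)\otimes\Q = \Q[1]\oplus \wt K_0(B\Gamma)\otimes\Q$ (with $K_1 = \wt K_1$).  Restricting $\mathcal{U}_n$ to the trivial representation $\sigma_0\in \Hom(\Gamma, U(n))$ gives the trivial rank-$n$ bundle over $B\Gamma$, so by naturality, evaluating $\mathcal{U}_n/x$ at $\sigma_0$ sends $\alpha[1]+\tilde x$ to $n\langle 1, x\rangle = n\alpha$; thus $\mathcal{U}_n/x = 0$ forces $\alpha = 0$, and then the monotonicity step forces $\tilde x = 0$.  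Rational injectivity of $x\mapsto \mathcal{U}/x$ follows, and the Betti number assertion is then immediate from the rational Chern character, which identifies $K^0\otimes\Q$ (respectively $K^1\otimes\Q$) with even (respectively odd) rational cohomology on both sides.  The main obstacle is the monotonicity step: invoking the $\FD$ hypothesis together with finite-dimensionality of rational reduced $K$--homology is what forces the dimension $n$ to eventually suffice uniformly; all other ingredients are naturality bookkeeping for the slant product.
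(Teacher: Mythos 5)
Your proof is correct and follows essentially the same route as the paper: the pullback identity $E_\rho = (\rho\times\Id)^*\mathcal{U}$ combined with Lemma~\ref{slem}(1), padding by trivial representations to gain monotonicity in $n$, and finite-dimensionality of $K_*(B\Gamma)\otimes\Q$ to extract a uniform $n$. You spell out two steps that the paper leaves implicit --- the stabilizing chain of kernels $V_n$ that makes the uniformity argument precise, and the separate treatment of the $[1]$-summand in $K_0$ by evaluation at the trivial representation --- but these are just careful bookkeeping for the same argument, not a different approach.
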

\begin{proof} Since $\Gamma$ is in $\FD$, every rational $K$--homology class $x\in K_* (B\Gamma) \otimes \Q$ satisfies $\rho_*(x) \neq 0\in K^* (X)$ for some family of representations $\rho\co X\to \Hom(\Gamma, U(n))$.  By Part 1 of Lemma~\ref{slem}, we have $\rho^* ([\mathcal{U}]/x) = [E_\rho]/x = \rho_* (x)$, so $[\mathcal{U}]/x\in K^* (\Hom(\Gamma, U(n)))\otimes \Q$ must be non-zero as well.  As we have assumed $B\Gamma$ has the homotopy type of a finite CW complex, $K_* (B\Gamma)\otimes \Q$ is finitely generated, so any sufficiently large $n$ works for all $x\in K_* (B\Gamma)$ (note here that $E_{\rho\oplus 1} = E_\rho \oplus E_1$, so $(\rho\oplus 1)_* (x) = \rho_* (x) + 1_* (x) = \rho_* (x)$ for any $x\in \wt{K}_* (B\Gamma)$).
The statement about cohomology follows from consideration of the Chern character.
\end{proof}

\section{Families of representations and analytic assembly}\label{togsec}

In this section, we relate groups in the class $\FD$ from Section \ref{fdsec} back to the analytic assembly map from Section \ref{asssec}.  The main result is as follows.

\begin{assfac}\label{assfac}
For each family of representations $\rho\co X \to \text{Hom}(\Gamma, U(k))$, the detecting map $\rho_*\co K_*(B\Gamma)\to K^*(X)$ in Definition \ref{bunhom} factors through the analytic assembly map
$$
\mu\co K_*(B\Gamma)\to K_*(C^*(\Gamma))
$$
defined in Definition \ref{ass}.
\end{assfac}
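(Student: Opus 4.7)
The plan is to construct, from $\rho$, a natural $*$-homomorphism $\phi_\rho\co C^*(\Gamma)\maps C(X)\otimes M_k(\C)$ and show that the induced map on $K$--theory implements the desired factorization of $\rho_*$ through $\mu$.

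First, I would observe that joint continuity of the family $\rho$ gives a continuous group homomorphism $\Gamma \maps U(C(X)\otimes M_k(\C))$ sending $g$ to the unitary $x\mapsto \rho_x(g)$. By the universal property of the maximal group $C^*$--algebra, this extends uniquely to a unital $*$--homomorphism $\phi_\rho\co C^*(\Gamma)\maps C(X)\otimes M_k(\C)$. Stability of $K$--theory under tensoring with $M_k(\C)$ gives a natural isomorphism $K_*(C(X)\otimes M_k(\C))\isom K^{-*}(X)$, so $\phi_\rho$ induces
$$(\phi_\rho)_*\co K_*(C^*(\Gamma))\maps K^*(X).$$
The claim is that $(\phi_\rho)_*\circ\mu=\rho_*$, which gives the desired factorization.

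Second, I would reduce this identity to a statement about classes on $B\Gamma\cross X$. Applying Lemma~\ref{slem}(1) with $A=C^*(\Gamma)$, $B=C(B\Gamma)$ and $\phi=\phi_\rho$, for any $y\in K_*(B\Gamma)$ we have
$$(\phi_\rho)_*(\mu(y))=(\phi_\rho)_*([M_\Gamma]/y)=((\phi_\rho\otimes 1_{C(B\Gamma)})_*[M_\Gamma])/y.$$
Comparing with $\rho_*(y)=[E_\rho]/y$, it therefore suffices to establish the single $K$--theoretic identity
$$(\phi_\rho\otimes 1_{C(B\Gamma)})_*[M_\Gamma]=[E_\rho] \in K_0(C(X)\otimes M_k(\C)\otimes C(B\Gamma))\isom K^0(B\Gamma\cross X).$$

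Third, I would verify this identity at the level of finitely generated projective modules. By Lemma~\ref{misbunlem}, the Miscenko bundle is represented by the module $C_b(E\Gamma,C^*(\Gamma))^\Gamma$ over $C^*(\Gamma)\otimes C(B\Gamma)$. Pushing forward along $\phi_\rho\otimes 1$ amounts to tensoring over $C^*(\Gamma)$ with $C(X)\otimes M_k(\C)$; unwinding the $\Gamma$--equivariance, this produces the module of $\Gamma$--equivariant continuous maps $E\Gamma\maps C(X,M_k(\C))$, where $\Gamma$ acts on the target by $(g\cdot h)(z)(x)=\rho_x(g)h(g^{-1}z)(x)$ (i.e., by left multiplication via $\rho_x$ in $M_k(\C)$). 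Under the Morita equivalence $C(X\cross B\Gamma)\otimes M_k(\C)\sim C(X\cross B\Gamma)$ obtained by compressing with a rank-one projection $e\in M_k(\C)$ (equivalently, writing $M_k(\C)=\C^k\otimes (\C^k)^*$ with $\Gamma$ acting only on the first factor), this module corresponds to $\Gamma$--equivariant maps $E\Gamma\maps C(X,\C^k)$, which is precisely the space of sections of $E_\rho$ in Definition~\ref{bundef}.

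The main obstacle is the bookkeeping in the last step: the Morita equivalence isomorphism must be shown to transport the pushforward class to $[E_\rho]$ exactly, not merely up to a rational multiple. Concretely, the $\Gamma$--representation on $M_k(\C)$ by left multiplication decomposes as $k$ copies of the defining representation on $\C^k$, and compression by a rank-one projection picks out exactly one of these copies, so the two factors of $k$ cancel. An alternative route, probably cleaner to execute, is to use Lemma~\ref{classifying-map} to identify the classifying map of $E_\rho$ and match it directly with the classifying map for the pushforward of $M_\Gamma$ obtained from the diagram~\eqref{Erho-Brho}; either approach reduces the factorization to a naturality property of the bundle-theoretic and $C^*$-algebraic descriptions of representations of $\Gamma$.
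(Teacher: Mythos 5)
Your proposal follows essentially the same route as the paper: you construct the $*$-homomorphism $\phi_\rho$ (the paper's $\rho^\sharp$), reduce the factorization to the identity $(\phi_\rho\otimes 1_{C(B\Gamma)})_*[M_\Gamma]=[E_\rho]$ via Lemma~\ref{slem}(1), and verify that identity at the level of projective modules, passing through $C_b(E\Gamma\times X,M_k(\C))^\Gamma$ and then applying Morita equivalence to land on the sections of $E_\rho$. This is exactly the content of the paper's Lemma~\ref{rholem} and the short proof of Proposition~\ref{assfac} that follows it.
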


To prove this, we will need a lemma relating the Miscenko bundle $M_\Gamma$ and the bundle $E_\rho$ associated to $\rho$.  Note first that if $\rho\co X\to \text{Rep}_k(\Gamma)$ is a family of representations, then $\rho$ defines a $*$-homomorphism
\begin{align*}
\rho^\sharp\co C^*(\Gamma) & \to M_k(C(X)) \\
u_g & \mapsto (~x\mapsto \rho_x(g)~).
\end{align*}

\begin{rholem}\label{rholem}
The image of the Miscenko line bundle $[M_\Gamma]\in K_0(C^*(\Gamma)\otimes C(B\Gamma))$ under the map induced by the $*$-homomorphism
$$
\rho^\sharp\otimes 1_{C(B\Gamma)}:C^*(\Gamma)\otimes C(B\Gamma)\to M_k(C(X))\otimes  C(B\Gamma)
$$
identifies naturally with the class $[E_\rho]\in K^0(B\Gamma\times X)$ from Definition \ref{bundef} above.
\end{rholem}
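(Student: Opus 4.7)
The plan is to unpack $K_0$-functoriality on both sides and produce an explicit isomorphism of modules witnessing the asserted identification. Write $\mathcal{M} := C_b(E\Gamma, C^*(\Gamma))^\Gamma$ for the Miscenko module from Lemma~\ref{misbunlem}. On $K_0$, the map $(\rho^\sharp \otimes 1_{C(B\Gamma)})_*$ sends $[\mathcal{M}]$ to the class of
$$
\mathcal{M} \otimes_{C^*(\Gamma) \otimes C(B\Gamma)} \bigl(M_k(C(X)) \otimes C(B\Gamma)\bigr)
$$
as a right $M_k(C(X)) \otimes C(B\Gamma)$-module, where the bimodule structure comes via $\rho^\sharp \otimes 1$. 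I need to show that, after applying the canonical Morita identification $K_0(M_k(C(X)) \otimes C(B\Gamma)) \cong K^0(B\Gamma \times X)$, this class is $[E_\rho]$.

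The main step is to introduce the \emph{associated $M_k(C(X))$-bundle module}
$$
\mathcal{N} := C_b(E\Gamma, M_k(C(X)))^\Gamma,
$$
where $\Gamma$ acts on $M_k(C(X))$ by left matrix multiplication through $\rho^\sharp(u_g)$ (viewed as a unitary in $M_k(C(X))$), and to construct a right $M_k(C(X)) \otimes C(B\Gamma)$-module isomorphism
$$
\Theta\co \mathcal{M} \otimes_{C^*(\Gamma) \otimes C(B\Gamma)} \bigl(M_k(C(X)) \otimes C(B\Gamma)\bigr) \maps \mathcal{N}, \quad f \otimes A \otimes h \mapsto \bigl(z \mapsto \rho^\sharp(f(z)) A h(\pi(z))\bigr).
$$
Well-definedness modulo the balanced-tensor relations follows from multiplicativity of $\rho^\sharp$ and from the fact that $h(\pi(z))$ acts centrally on $M_k(C(X))$ at each fixed $z$; target-side $\Gamma$-equivariance $\Theta(\cdot)(gz) = \rho^\sharp(u_g)\Theta(\cdot)(z)$ follows from $f(gz) = u_g f(z)$. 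To see that $\Theta$ is bijective, I plan to work locally on a trivializing cover $\{U_i\}$ of $B\Gamma$ as in the proof of Lemma~\ref{misbunlem}: over each $U_i$ both sides reduce to the same free rank-one $M_k(C(U_i) \otimes C(X))$-module, and the global patching on both sides is given by the same $\Gamma$-valued transition cocycle post-composed with $\rho^\sharp$.

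The remaining step is Morita reduction. The bimodule $C(X)^k$ (column vectors, left $M_k(C(X))$ and right $C(X)$) implements the isomorphism $K_0(M_k(C(X)) \otimes C(B\Gamma)) \cong K^0(B\Gamma \times X)$ and sends $[\mathcal{N}]$ to $[\mathcal{N} \otimes_{M_k(C(X))} C(X)^k]$. The assignment $F \otimes v \mapsto (z \mapsto F(z) \cdot v)$ identifies this tensor product with the $C(B\Gamma \times X)$-module of $\Gamma$-equivariant continuous maps $E\Gamma \to C(X, \C^k)$ carrying the action $(g \cdot s)(x) = \rho_x(g) s(x)$; by the standard correspondence between sections of an associated vector bundle and equivariant maps on the total space of the principal bundle, this is precisely the module of sections of $E_\rho$. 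Composing with $\Theta$ then yields $(\rho^\sharp \otimes 1)_*[M_\Gamma] = [E_\rho]$.

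The main obstacle will be verifying that $\Theta$ is an isomorphism; once that is secured, the Morita reduction and identification with sections of $E_\rho$ are formal naturality checks. My preferred route is via local trivialization rather than by constructing a global inverse, because both $\mathcal{M}$ and $\mathcal{N}$ are most transparently described through the same $\Gamma$-valued transition data on a cover of $B\Gamma$ — applied through the identity on $C^*(\Gamma)$ for $\mathcal{M}$ and through $\rho^\sharp$ for $\mathcal{N}$.
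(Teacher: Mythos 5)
Your proposal follows essentially the same route as the paper's proof: both identify the Miscenko module with $\Gamma$-equivariant functions, push forward by $\rho^\sharp\otimes 1$ via the same explicit formula $f\otimes h\mapsto (z\mapsto \rho^\sharp(f(z))\,h(\pi(z)))$ to land in the $\Gamma$-invariant $M_k$-valued functions (the endomorphism bundle of $E_\rho$), and then perform the same Morita reduction by tensoring with column vectors $C(X)^k$ to recover the module of sections of $E_\rho$. The only cosmetic difference is that you propose to check bijectivity of $\Theta$ via a trivializing cover, while the paper simply exhibits the isomorphism from its formula without spelling out the verification.
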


\begin{proof}
Recall from the proof of Lemma \ref{misbunlem} that the space of sections of the Miscenko bundle identifies naturally with $C_b(E\Gamma,C^*(\Gamma))^\Gamma$.   It follows that the image of $[M_\Gamma]$ under $\rho^\sharp\otimes 1_{C(B\Gamma)}$ is the class in $K_*(C(B\Gamma\times X,M_k(\C)))$ of the module
$$
C_b(E\Gamma,C^*(\Gamma))^\Gamma\bigotimes_{C(B\Gamma,C^*(\Gamma))}C(B\Gamma\times X,M_k(\C)),
$$ 
where we have used the natural isomorphism $C(B\Gamma, M_k(C(X)))\cong C(B\Gamma\times X,M_k(\C))$, and the tensor product is defined via the left action of $C(B\Gamma,C^*(\Gamma))$ on $C(B\Gamma\times X,M_k(\C))$ coming from $\rho^\sharp\otimes 1_{C(B\Gamma)}$.  Define now a $\Gamma$ action on $C_b(E\Gamma\times X,M_k(\C))$ by
$$
(g\cdot f)(z,x):=\rho_x(g)f(g^{-1}z,x),
$$
and let $C_b(E\Gamma\times X,M_k(\C))^\Gamma$ denote the fixed points.  There is an isomorphism of $C(B\Gamma\times X,M_k(\C))$ modules
\begin{align*}
C_b(E\Gamma,C^*(\Gamma))^\Gamma\bigotimes_{C(B\Gamma,C^*(\Gamma))}C(B\Gamma\times X,M_k(\C)) \stackrel{\cong}{\to} C_b(E\Gamma\times X,M_k(\C))^\Gamma 
\end{align*}
defined for $f\in C_b(E\Gamma,C^*(\Gamma))^\Gamma$ and $h\in C(B\Gamma\times X,M_k(\C))$ by 
\begin{align*}
f\otimes h & \mapsto (~(z,x)\mapsto \rho_x(f(z))h(\pi(z),x )~)
\end{align*}
(where we have extended $\rho_x$ from $\Gamma$ to $C^*(\Gamma)$).
Hence the image of $[M_\Gamma]$ in $K_0(C(B\Gamma\times X,M_k(\C)))$ is represented by the finitely generated projective module 
$$
C_b(E\Gamma\times X,M_k(\C))^\Gamma; 
$$
the essential point is that this is the space of sections of the endomorphism bundle of $E_\rho$, which completes the proof.  More concretely, the image of this module under the Morita equivalence isomorphism
$$
K_*(C(B\Gamma\times X,M_k(\C)))\cong K^*(B\Gamma\times X)
$$
is given by
$$
C_b(E\Gamma\times X,M_k(\C))^\Gamma\bigotimes_{C(B\Gamma\times X,M_k(\C))}C(B\Gamma\times X,\C^k).
$$
Define a $\Gamma$ action on $C_b(E\Gamma\times X,\C^k)$ by 
$$
(g\cdot f)(z,x):=\rho_x(g)f(g^{-1}z,x);
$$
then there is an isomorphism
\begin{align*}
C_b(E\Gamma\times X,M_k(\C))^\Gamma\bigotimes_{C(B\Gamma\times X,M_k(\C))} & C(B\Gamma\times X,\C^k) \stackrel{\cong}{\to} C_b(E\Gamma\times X, \C^k)^\Gamma,
\end{align*}
defined for $f\in C_b(E\Gamma\times X,M_k(\C))^\Gamma$ and $h\in C(B\Gamma\times X,\C^k)$ by 
$$
f \otimes h \mapsto (~(z,x)\mapsto f(z,x) h(\pi(z),x)~)
$$
(here $\pi\co E\Gamma\to B\Gamma$ is the canonical quotient).
However, $C_b(E\Gamma\times X,\C^k)^\Gamma$ is simply the space of sections of $E_\rho$, and we are done.
\end{proof}
 
\begin{proof}[Proof of Proposition \ref{assfac}]
Consider a family of representations 
$$\rho\co X\to \Rep.$$ 
We will show that the following diagram commutes:
\begin{equation}\label{factorization}\xymatrix{ K_* (B\Gamma) \ar[r]^-\mu \ar[dr]_-{\rho_*} & K_* (C^*(\Gamma)) \ar[d]^-{(\rho^\sharp)_*}\\
	 & K^* (X).}
\end{equation}
Using Definition \ref{ass}, and Lemmas \ref{slem} and \ref{rholem}, we have that if 
$$[M_\Gamma]\in K_0(C(B\Gamma)\otimes C^*(\Gamma))$$
 is the Miscenko bundle and $x\in K_*(B\Gamma)$, then
$$
((\rho^\sharp)_*\circ\mu)(x)=(\rho^\sharp)_*([M_\Gamma]/x)=((\rho^\sharp\otimes 1_{C(B\Gamma)})_*[M_\Gamma])/x=[E_\rho]/x=\rho_*(x),
$$
where $\rho_*$ is an is Definition \ref{bunhom}.  The result follows.
\end{proof}

\begin{injcor}\label{injcor}
Let $\Gamma$ be group with finite classifying space $B\Gamma$ and let $x\in K_*(B\Gamma)\otimes \Q$ be a flatly detectable class.  Then 
$$(\mu\otimes\text{Id}_\Q)(x)\in K_*(C^*(\Gamma))\otimes\Q$$ 
is non-zero.  

In particular, if $\Gamma$ is a group in the class $\FD$, then the analytic assembly map is rationally injective.  \qed
\end{injcor}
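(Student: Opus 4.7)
The plan is to deduce the corollary almost immediately from Proposition \ref{assfac}.  That proposition produces, for every family of representations $\rho \co X \to \Rep$, a factorization $\rho_* = (\rho^\sharp)_* \circ \mu$ of the detecting map through the analytic assembly map.  My first step will be to tensor this commutative triangle with $\Q$: since rationalization is functorial on abelian groups, one obtains the analogous identity $\rho_* \otimes \Id_\Q = ((\rho^\sharp)_* \otimes \Id_\Q) \circ (\mu \otimes \Id_\Q)$ on the rationalized $K$-groups.

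Next, I would handle the first assertion by direct substitution.  If $x \in K_*(B\Gamma) \otimes \Q$ is flatly detectable, Definition \ref{fddef} furnishes a family $\rho \co X \to \Rep$ such that $(\rho_* \otimes \Id_\Q)(x) \neq 0$ in $K^*(X) \otimes \Q$.  Plugging $x$ into the rationalized factorization yields
$$0 \neq (\rho_* \otimes \Id_\Q)(x) = \bigl((\rho^\sharp)_* \otimes \Id_\Q\bigr)\bigl((\mu \otimes \Id_\Q)(x)\bigr),$$
which forces $(\mu \otimes \Id_\Q)(x) \neq 0$ in $K_*(C^*(\Gamma)) \otimes \Q$.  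For the second assertion, Definition \ref{fddef} tells us that when $\Gamma \in \FD$ every non-zero class in $K_*(B\Gamma) \otimes \Q$ is flatly detectable, so the first assertion makes $\mu \otimes \Id_\Q$ have trivial kernel, i.e.\ rational injectivity of $\mu$.

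There is essentially no serious obstacle: the content has already been packaged into Proposition \ref{assfac}, and behind it Lemma \ref{rholem}.  The only point I would flag explicitly is that the factorization survives rationalization for free, which is ultimately why the class $\FD$ is naturally aimed at the rational injectivity version of the strong Novikov conjecture rather than some integral refinement.
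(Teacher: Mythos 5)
Your proposal is correct and is essentially the paper's own argument: it rationalizes the commuting triangle from Proposition \ref{assfac}, plugs in the detecting family guaranteed by Definition \ref{fddef}, and observes that a non-zero image under $\rho_*\otimes\Id_\Q$ forces $(\mu\otimes\Id_\Q)(x)\neq 0$, with rational injectivity in the $\FD$ case following immediately.
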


\begin{proof}  If $x\in K_i(B\Gamma)\otimes \Q$ is flatly detectable, then (Definition~\ref{fddef}) 
there exists a family $\rho\co X\to \Rep$ such that $(\rho_*\otimes \Id_\Q) (x)$ is non-zero in
$K^{-i}(X)\otimes \Q$.  Commutativity of Diagram (\ref{factorization}) now shows that 
$(\mu\otimes\text{Id}_\Q)(x)$ is non-zero as well.  If $\Gamma$ is in the class $\FD$, then every class in $K_*(B\Gamma)\otimes \Q$ is flatly detectable, so we find that 
the kernel of $\mu\otimes\text{Id}_\Q$ is trivial, as desired.
\end{proof}

Corollary~\ref{injcor} is enough to imply, for example, the Novikov conjecture for $\Gamma$ (a proof may be found in \cite{Kaminker:1985fk}; note that the argument there is phrased in terms of the reduced $C^*$--algebra of $\Gamma$, but also applies to $C^*(\Gamma)$), and that (if a closed manifold) $B\Gamma$ does not admit a metric of positive scalar curvature (see \cite{Rosenberg:1983qy}).

\appendix

\section{The slant product and the Kasparov product}\label{kasap}

In the main part of the piece (Definition \ref{slant}), we have introduced a slant product in operator $K$--theory in order to give an elementary picture of the assembly map.  The more usual way of describing the assembly map is via the \emph{Kasparov product}: see for example \cite[Section 6]{Kasparov:1988dw}, where the assembly map is denoted $\beta$.  In this appendix, we show that our slant product agrees with the Kasparov product.  More precisely, we prove the following result.

\begin{slakas}\label{slakas}
The slant product 
$$
K_i(A\otimes B)\otimes K^j(B)\to K_{i-j}(A)
$$
from Definition \ref{slant} agrees naturally with the Kasparov product
$$
KK_i(\C,A\otimes B)\otimes KK^j(B,\C)\to KK_{i-j}(\C,A).
$$
\end{slakas}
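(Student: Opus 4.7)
The plan is to reduce, via naturality and compatibility with external products, to the case $A = \C$ already verified in Example \ref{pairex}.

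I would first record two properties of the Kasparov product in this specialization that mirror Lemma \ref{slem}: naturality in $A$ with respect to any $*$-homomorphism $\phi\co A\to C$ (i.e.\ $\phi_*(\alpha \cdot \beta) = (\phi_*\alpha) \cdot \beta$), and the external product identity $(z \otimes \alpha) \cdot \beta = z \otimes (\alpha \cdot \beta)$ for $z \in KK_k(\C, C)$.  Both are standard consequences of the associativity and bilinearity of the Kasparov product, together with the fact that the external product in $KK$ is itself a special Kasparov product.  By Example \ref{pairex}, when $A = \C$ our slant product is the classical index pairing $([p],[u]) \mapsto \mathrm{Index}(pu_np + 1 - p)$; this is exactly the formula for the Kasparov product when $A = \C$ (see e.g.\ Kasparov \cite{Kasparov:1988dw}, or \cite[Chapter 8]{Higson:2000bs}), and the odd-odd case $i = j = 1$ is handled similarly via a Toeplitz-type index computation.

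With the base case in hand, the general case follows by a factorization argument.  Given $x \in K_i(A \otimes B)$ and $y \in K^j(B)$, I would realize both products as the composition of (a) the external $K$-theory product producing a class in $K_{i+1-j}(A\otimes B \otimes \mathcal{D}(B))$, (b) the $*$-homomorphism $\sigma$ of Definition \ref{slant} landing in the quotient $(A \otimes \mathcal{B}(\mathcal{H}_B))/(A\otimes \mathcal{K}(\mathcal{H}_B))$, and (c) the boundary map for the short exact sequence $0 \to A \otimes \mathcal{K}(\mathcal{H}_B) \to A \otimes \mathcal{B}(\mathcal{H}_B) \to (A \otimes \mathcal{B}(\mathcal{H}_B))/(A \otimes \mathcal{K}(\mathcal{H}_B)) \to 0$.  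For the slant product this factorization is exactly Definition \ref{slant}.  For the Kasparov product it follows from Kasparov's original construction, using the Paschke duality identification $K^j(B) \cong K_{1-j}(\mathcal{D}(B))$ to match a Kasparov-module representative of $y$ with our unitary representative in $\mathcal{D}(B)$, and observing that the Kasparov product by $[y]$ is implemented, at the Hilbert-module level, by the compression along $\sigma$ followed by the $A$-index.  Once this identification of factorizations is established, the $A=\C$ case combined with naturality and the external-product formula (applied on both sides) yields the result for arbitrary $A$.

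The main obstacle lies in the last identification: verifying that Kasparov's Hilbert-module construction of $\alpha \cdot_B \beta$ really does coincide with our $\sigma$-and-boundary picture under Paschke duality.  This is essentially bookkeeping---one expresses the Paschke duality isomorphism as pairing with a canonical class in $KK^0(B \otimes \mathcal{D}(B), \C)$, then checks that Kasparov product with this canonical class reproduces $\sigma$ composed with the boundary map---but it requires careful sign- and parity-tracking, particularly in the odd-degree cases where one must pass through a suspension and account for the conventions built into the Paschke duality shift $K^j(B)=K_{1-j}(\mathcal{D}(B))$.
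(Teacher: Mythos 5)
Your plan has a genuine gap: the step that does all the mathematical work is stated as an ``observation'' and then deferred to ``bookkeeping,'' but that step is the entire content of the proposition and is not established.  Concretely, you write that ``the Kasparov product by $[y]$ is implemented, at the Hilbert-module level, by the compression along $\sigma$ followed by the $A$-index,'' and you later describe verifying this as ``essentially bookkeeping.''  This identification is precisely what must be proved, and it is not a formal consequence of naturality, bilinearity, or the external-product formula.  The paper's proof consists exactly of carrying out this verification: for $i=j=0$ and unital $A,B$, it takes the standard graded Kasparov cycles $(A\otimes B\otimes\widehat{l^2},F)$ for $[p]\in K_0(A\otimes B)$ (with $F$ built from a partial isometry $v$ satisfying $v^*v=1-p$, $vv^*=1$) and $(\widehat{\mathcal{H}^B},G)$ for $[u]\in K_0(\mathcal{D}(B))$, exhibits a $G\otimes 1$-connection $\hat G$, applies Blackadar's product formula to get the Fredholm operator $F\hat\otimes 1 + ((1-F^2)^{1/2}\hat\otimes 1)\hat G$, and simplifies the result modulo compacts to the $K_1$-class of $p(1\otimes u)p + (1-p)$ in the corona algebra --- which is then matched to the slant product via Example \ref{pairex}.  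Without a computation of this kind, or an explicit expression of Paschke duality as a concrete $KK$-class whose Kasparov product you then compute, the identification you need remains unproved.

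A secondary issue is your opening strategy of ``reducing to $A=\C$ via naturality and compatibility with external products.''  Naturality (part (1) of Lemma \ref{slem}) concerns maps $\phi\co A\to C$, which cannot be used to transport a class $x\in K_i(A\otimes B)$ down to the $A=\C$ case; and the external-product reduction (part (2)) only applies to classes of the form $z\otimes x'$ with $z\in K_*(A)$, $x'\in K_*(B)$.  Not every class in $K_i(A\otimes B)$ is a sum of external products --- the K\"unneth sequence has a Tor term in general --- so this reduction fails for arbitrary $A,B$, which is the setting of the proposition.  You do pivot in the next paragraph to the factorization strategy, which would cover the general case if completed, but then you never need the reduction to $A=\C$ at all; as written, the two strategies are layered in a way that obscures which argument is actually carrying the proof.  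The real comparison to make: the paper and your factorization idea agree on \emph{where} Example \ref{pairex} enters (identifying a $K_1$-class of the corona with an index), but the paper supplies the missing link --- the explicit Kasparov-cycle computation --- that you leave open.
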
 

\begin{proof}
For the sake of simplicity (and as it is the only case we need), assume that $A$ and $B$ are unital $C^*$-algebras.  Using a suspension argument, it suffices to consider the case $i=j=0$.

It suffices to show that if $p\in M_n(A\otimes B)$ defines a class $[p]\in K_0(A\otimes B)$ and $u\in\mathcal{D}(B)$ defines a class $[u]\in K^0(B)$, then (under the natural identifications of these groups with the corresponding $KK$-groups) the two products agree.  We start by constructing $KK$-elements corresponding to $p$ and $u$; we use the standard Kasparov picture of $KK$ (with graded formalism).
\begin{itemize}
\item Let $l^2$ denote the Hilbert space $l^2(\N)$. We may consider $p$ as defining a bounded operator on the Hilbert-($A\otimes B$)-module $A\otimes B\otimes l^2$, which is supported on 
$$
A\otimes B\otimes \text{span}\{\delta_0,...,\delta_{n-1}\}.
$$
Using Kasaprov's stabilization theorem \cite[Page 151]{Kasparov:1988dw}, there exists a partial isometry $v\in M(A\otimes B\otimes \mathcal{K}(l^2))$ such that $v^*v=1-p$ and $vv^*=1$.  

Let $\widehat{l^2}$ denote the Hilbert space $l^2_{ev}\oplus l^2_{od}$, where each summand is a copy of $l^2$, and grade $\widehat{l^2}$ by stipulating that the first summand is even and the second odd.  Define an operator $F$ on 
\begin{equation}\label{decomp}
A\otimes B\otimes \widehat{l^2}\cong (A\otimes B\otimes l^2_{ev})\oplus (A\otimes B\otimes l^2_{od})
\end{equation}
by 
$$
F=\begin{pmatrix} 0 & v^* \\ v & 0 \end{pmatrix}
$$
(where of course the matrix decomposition reflects the direct sum decomposition in line \eqref{decomp}).
The pair $(A\otimes B\otimes \widehat{l^2},F)$ then defines an element $[F]\in KK_0(\C,A\otimes B)$ which corresponds to $[p]\in K_0(A\otimes B)$ under the natural isomorphism $KK_0(\C,A\otimes B)\cong K_0(A\otimes B)$.
\item Let $\mathcal{H}^B$ be the ample $B$-Hilbert space on which $\mathcal{D}(B)$ is defined, and let $\widehat{\mathcal{H}^B}$ denote the Hilbert space $\mathcal{H}^B_{ev}\oplus\mathcal{H}^B_{od}$, which is defined analogously to $\widehat{l^2}$ above.  $\widehat{\mathcal{H}^B}$ is equipped with the natural action of $B$ (by even operators).  Define
$$
G=\begin{pmatrix} 0 & u^* \\ u & 0 \end{pmatrix}\in\mathcal{B}(\widehat{H}^B)
$$
and note that the pair $(\widehat{\mathcal{H}^B},G)$ defines an element $[G]\in KK^0(B,\C)$ that corresponds to $[u]\in K^0(B)$ under the natural isomorphism $KK^0(B,\C)\cong K^0(B)$.
\end{itemize}

Note that to take the Kasparov product of $[F]$ and $[G]$, we must first replace $[G]$ with the element $[\widehat{\mathcal{H}^B}\otimes A,G\otimes 1]\in KK(A\otimes B,A)$; by an abuse of notation, however, we still denote this element $[G]$.

We must now compute the Kasparov product $[F]\otimes [G]\in KK_0(\C,A)$.  The Hilbert-$A$-module for this element is 
\begin{equation}\label{mod}
(A\otimes B\otimes \widehat{l^2})\bigotimes_{A\otimes B} (\widehat{\mathcal{H}^B}\otimes A) \cong A\otimes \widehat{l^2}\otimes\widehat{\mathcal{H}^B}.
\end{equation}
We will use the decomposition 
$$
(A\otimes l^2_{ev}\otimes \mathcal{H}^B_{ev})\oplus (A\otimes l^2_{od}\otimes \mathcal{H}^B_{od})\oplus (A\otimes l^2_{ev}\otimes \mathcal{H}^B_{od})\oplus (A\otimes l^2_{od}\otimes \mathcal{H}^B_{ev})
$$
of this module to write operators on it as $4\times 4$ matrices.  Note now that a $G\otimes 1$-connection (see \cite[Section 18.3]{Blackadar:1998yq}) is given by
$$
\hat{G}=\begin{pmatrix} 0 & 0 & 1\otimes u^* & 0 \\ 0 & 0 & 0 & -1\otimes u^* \\ 1\otimes u & 0 & 0 & 0 \\ 0 & -1\otimes u & 0 & 0 \end{pmatrix},
$$
whence \cite[Proposition 18.10.1]{Blackadar:1998yq} implies that the product $[F]\otimes [G]$ can be represented by the pair
\begin{equation}\label{prod}
(A\otimes \widehat{l^2}\otimes\widehat{\mathcal{H}^B}~,~F\hat{\otimes} 1 +((1-F^2)^\frac{1}{2}\hat{\otimes} 1)\hat{G}).
\end{equation}
Now, the natural (even) action of $A\otimes B\otimes \mathcal{K}(l^2)$ on $A\otimes \mathcal{H}^B\otimes l^2$ extends to the multiplier algebra, so we may treat the operators $p$, $v$ and $v^*$ as acting directly on $A\otimes \mathcal{H}^B\otimes l^2$.  Having adopted this convention, the operator from line \eqref{prod} above is equal to 
\begin{align*} 
& \begin{pmatrix} 0 & v^* \\ v &0 \end{pmatrix}\hat{\otimes 1} +\Big(\begin{pmatrix} 1-v^*v & 0 \\ 0 & 1-vv^*\end{pmatrix}^\frac{1}{2}\hat{\otimes} 1\Big)\begin{pmatrix} 0 & 0 & 1\otimes u^* & 0 \\ 0 & 0 & 0 & -1\otimes u^* \\ 1\otimes u & 0 & 0 & 0 \\ 0 & -1\otimes u & 0 & 0 \end{pmatrix} \\
& =\begin{pmatrix} 0 & 0 & 0 &  v^* \\ 0 & 0 & v & 0 \\ 0 & v^* & 0 & 0 \\ v & 0 & 0 & 0 \end{pmatrix}+\begin{pmatrix} p & 0 & 0 & 0 \\ 0 & 0 & 0 & 0 \\ 0 & 0 & p & 0 \\ 0 & 0 & 0 & 0 \end{pmatrix}\begin{pmatrix} 0 & 0 & 1\otimes u^* & 0 \\ 0 & 0 & 0 & -1\otimes u^* \\ 1\otimes u & 0 & 0 & 0 \\ 0 & -1\otimes u & 0 & 0 \end{pmatrix} \\
& = \begin{pmatrix} 0 & 0 & p(1\otimes u^*) & v^* \\ 0 & 0 & v & 0 \\ p(1\otimes u) & v^* & 0 & 0 \\ v & 0 & 0 & 0 \end{pmatrix}.
\end{align*}
Passing back to the ungraded picture, the class of the cycle
$$
\Big(A\otimes \widehat{l^2}\otimes\widehat{\mathcal{H}^B}~~,~~\begin{pmatrix} 0 & 0 & p(1\otimes u^*) & v^* \\ 0 & 0 & v & 0 \\ p(1\otimes u) & v^* & 0 & 0 \\ v & 0 & 0 & 0 \end{pmatrix}\Big)
$$
in $KK_0(\C,A)$ corresponds under the isomorphism
$$
KK_0(\C,A)\cong K_1\Big(\frac{M(A\otimes\mathcal{K}(l^2\otimes \widehat{\mathcal{H}^B}))}{A\otimes\mathcal{K}(l^2\otimes \widehat{\mathcal{H}^B})}\Big) ~~~(~\cong K_0(A)~)
$$
to the $K_1$-class defined by 
$$
\begin{pmatrix} p(1\otimes u) & v^* \\ v & 0 \end{pmatrix}\in M(A\otimes\mathcal{K}(l^2\otimes \widehat{\mathcal{H}^B}))
$$
(this element is indeed unitary modulo $A\otimes\mathcal{K}(l^2\otimes \widehat{\mathcal{H}^B})$).
Modulo $A\otimes\mathcal{K}(l^2\otimes \widehat{\mathcal{H}^B})$, however, we have that  
$$
\begin{pmatrix} p(1\otimes u) & v^* \\ v & 0 \end{pmatrix}=\begin{pmatrix} p(1\otimes u)p & v^* \\ v & 0 \end{pmatrix}=\begin{pmatrix} p(1\otimes u)p +(1-p) & 0 \\ 0 & 1\end{pmatrix}\begin{pmatrix} p & v^* \\ v & 0\end{pmatrix};
$$
moreover, the second matrix in the product satisfies $X^2=I$, and is thus $K$--theoretically trivial.  Hence the class we have is 
$$
\Big[\begin{pmatrix} p(1\otimes u)p +(1-p) & 0 \\ 0 & 1\end{pmatrix}\Big]\in K_1\Big(\frac{M(A\otimes\mathcal{K}(l^2\otimes \widehat{\mathcal{H}^B}))}{A\otimes\mathcal{K}(l^2\otimes \widehat{\mathcal{H}^B})}\Big);
$$
using the fact that the inclusion 
$$
\frac{A\otimes\mathcal{B}(l^2\otimes \widehat{\mathcal{H}^B})}{A\otimes\mathcal{K}(l^2\otimes \widehat{\mathcal{H}^B})}\hookrightarrow \frac{M(A\otimes\mathcal{K}(l^2\otimes \widehat{\mathcal{H}^B}))}{A\otimes\mathcal{K}(l^2\otimes \widehat{\mathcal{H}^B})}
$$
induces an isomorphism on $K$--theory and the argument of Example \ref{pairex}, however, it is not difficult to see that the image of this in $K_0(A)$ is precisely the same as the slant product $[p]/[u]$ from Definition \ref{slant}.
\end{proof}

\end{document}